\newtheorem{theorem}{Theorem}
\theoremstyle{plain}
\newtheorem{corollary}{Corollary}
\newtheorem{definition}{Definition}
\newtheorem{lemma}{Lemma}
\newtheorem{remark}{Remark}
\numberwithin{equation}{section}
\begin{document}
\title[Sobolev inequalities in metric spaces]{Symmetrization and sharp Sobolev inequalities in metric spaces }
\author{Jan Kali\v s}
\address{Department of Mathematics\\
Florida Atlantic University\\
Boca Raton, Fl 33431}
\email{kalis@math.fau.edu}
\author{Mario Milman}
\address{Department of Mathematics\\
Florida Atlantic University\\
Boca Raton, Fl 33431}
\email{extrapol@bellsouth.net}
\urladdr{http://www.math.fau.edu/milman}

\begin{abstract}
We derive sharp Sobolev inequalities for Sobolev spaces on metric spaces. In
particular, we obtain new sharp Sobolev embeddings and Faber-Krahn estimates
for H\"{o}rmander vector fields.
\end{abstract}\maketitle

\section{Introduction}

Recently, a rich theory of Sobolev spaces on metric spaces has been developed
(cf. \cite{HK}, \cite{HKF}, and the references therein). In particular, this
has led to the unification of some aspects of the classical theory of Sobolev
spaces with the theory of Sobolev spaces of vector fields satisfying
H\"{o}rmander's condition. At the root of these developments are suitable
forms of Poincar\'{e} inequalities which, in fact, can be used to provide a
natural method to define the notion of a gradient in the setting of metric
spaces. In the theory of H\"{o}rmander vector fields the relevant Poincar\'{e}
inequalities had been obtained much earlier by Jerison \cite{JE}:
\begin{equation}
\left(  \frac{1}{\left|  B\right|  }\int_{B}\left|  f-f_{B}\right|
^{2}dx\right)  ^{1/2}\leq Cr(B)\left(  \frac{1}{\left|  B\right|  }\int
_{B}\left|  Xf\right|  ^{2}dx\right)  ^{1/2}, \label{vale}%
\end{equation}
where $X=(X_{1},\dots,X_{m})$ is a family of $\mathcal{C}^{\infty}$
H\"{o}rmander vector fields, $\left|  Xf\right|  =\left(  \sum\left|
X_{i}f\right|  ^{2}\right)  ^{1/2},$ $dx$ is Lebesgue measure, $B$ is a ball
of radius $r(B)$, with respect to the Carnot-Carath\'{e}odory metric. For more
on the connection between the theory of Sobolev spaces on metric spaces and
Sobolev spaces on\ Carnot groups we refer to the Appendix below and \cite{HKF}.

The purpose of this paper is to prove sharp forms of the classical Sobolev
inequalities in the context of metric spaces. In fact, we develop an approach
to symmetrization in the metric setting which has applications to other
problems as well. In particular, we will show some functional forms of the
Faber-Krahn inequalities which are new even in the classical setting.

A well known, and very natural, approach to the Sobolev inequalities is
through the use of the isoperimetric inequality and related rearrangement
inequalities (for an account cf. \cite{Ta}). For example, a good deal of the
classical inequalities can be in fact derived from (cf. \cite{BMR}, \cite{Ko},
and also \cite{JMMM})
\begin{equation}
\frac{1}{t}\int_{0}^{t}[f^{\ast}(s)-f^{\ast}(t)]ds\leq ct^{1/n}\left(
\frac{1}{t}\int_{0}^{t}\left|  \nabla f\right|  ^{\ast}(s)ds\right)
,t>0,f\in\mathcal{C}_{0}^{\infty}(\mathbb{R}^{n}). \label{ser1}%
\end{equation}
For example, in \cite{BMR} and \cite{MP} it is shown how, starting from
(\ref{ser1}), one can derive Sobolev inequalities which are sharp, including
the borderline cases, within the class of Sobolev spaces based on
rearrangement invariant spaces. Therefore, it seemed natural to us to try to
extend (\ref{ser1}) to the metric setting. At the outset one obstacle is that
the usual methods to prove (\ref{ser1}) are not available for metric spaces
(cf. \cite{BMR}, \cite{JMMM}) . However, we noticed that, in the Euclidean
setting, (\ref{ser1}) is the rearranged version of a Poincar\'{e} inequality.
More specifically, suppose that $f$ and $g$ are functions such that, for any
cube $Q\subset\mathbb{R}^{n}$ with sides parallel to the coordinate axes, we
have
\begin{equation}
\frac{1}{\left|  Q\right|  }\int_{Q}\left|  f(x)-f_{Q}\right|  dx\leq
c\frac{\left|  Q\right|  ^{1/n}}{\left|  Q\right|  }\int_{Q}g(x)dx.
\label{vale1}%
\end{equation}
Then, the following version of (\ref{ser1}) holds,
\begin{equation}
\frac{1}{t}\int_{0}^{t}[f^{\ast}(s)-f^{\ast}(t)]ds\leq ct^{1/n}\left(
\frac{1}{t}\int_{0}^{t}g^{\ast}(s)ds\right)  . \label{ser6}%
\end{equation}
By Poincar\'{e}'s inequality, (\ref{vale1}) holds with $g=|\nabla f|$ and
therefore the implication $(\ref{vale1})\Rightarrow(\ref{ser6})$ provides us
with a proof of (\ref{ser1}). This is somewhat surprising since the usual
proofs of (\ref{ser1}) depend on a suitable representation of $f$ in terms of
$\nabla f$. Since, in the context of metric spaces, the gradient is defined
through the validity of (\ref{vale1}), this is a crucial point for our
development of the symmetrization method in this setting.

Since the mechanism involved in transforming (\ref{vale1}) into (\ref{ser6})
plays an important role in our approach, it is instructive to present it here
in the somewhat simpler, but central, Euclidean case. The first step is to
reformulate (\ref{vale1}) as an inequality between maximal operators
\begin{align}
f_{1/n}^{\#}(x)  &  :=\sup_{Q\backepsilon x}\frac{1}{\left|  Q\right|
^{1+1/n}}\int_{Q}\left|  f(x)-f_{Q}\right|  dx\nonumber\\
&  \leq c\sup_{Q\backepsilon x}\frac{1}{\left|  Q\right|  }\int_{Q}%
g(x)dx=cMg(x), \label{ser3}%
\end{align}
where $M$ is the non-centered maximal operator of Hardy-Littlewood. At this
point taking rearrangements on both sides of (\ref{ser3}) leads to
\begin{equation}
(f_{1/n}^{\#})^{\ast}(t)\leq cMg^{\ast}(t)\leq cg^{\ast\ast}(t). \label{ser4}%
\end{equation}
Here the estimate for the maximal operator of Hardy-Littlewood is a well
known, and easy, consequence of the fact that $M$ is weak type $(1,1)$ and
strong type $(\infty,\infty)$. Moreover, by a simple variant of an inequality
of Bennett-DeVore-Sharpley \cite{BDS}, we have%
\begin{equation}
\left(  f^{\ast\ast}(t)-f^{\ast}(t)\right)  t^{-1/n}\leq c(f_{1/n}^{\#}%
)^{\ast}(t). \label{ser5}%
\end{equation}
Combining (\ref{ser4}) and (\ref{ser5}), we see that if (\ref{vale1}) holds
then (\ref{ser6}) holds.

The method of proof outlined above can be developed in more general settings
as long as suitable variants of the classical covering lemmas, which are
needed to estimate the underlying maximal operators, are available. In the
context of metric spaces the covering lemmas we need\footnote{In the PhD
thesis of the first author \cite{Ka}, the required covering lemmas been
obtained for domains with Lipschitz boundaries.} were obtained in \cite{LP}.
Once the rearrangement inequalities are at hand we can use standard machinery
to derive suitable Sobolev inequalities (see Section 3).

To give a more precise description of the contents of this paper we now recall
the definition of a $p-q-$Poincar\'{e} inequality. In what follows $(X,\mu)$
is a homogenous metric space\footnote{See Definition
\ref{de:homogeneous_space} below.} with a doubling Borel measure $\mu$ of
dimension $s$.

\begin{definition}
\label{de:metric_poincare}(cf. \cite{HK}, \cite{HKF}) Let $\Omega$ be a
measurable subset of $X,$ and let $f$ and $g$ be measurable functions defined
on $\Omega$, with $g\geq0.$ Let $p,q\geq1.$ We shall say that $f$ and $g$
satisfy a $p-q-$Poincar\'{e} inequality$,$ if for some constants $c_{P}>0$,
$\sigma\geq1,$%
\begin{equation}
\left(  \frac{1}{\mu(B)}\int_{B}|f(x)-f_{B}|^{p}d\mu(x)\right)  ^{1/p}\leq
c_{P}r(B)\left(  \frac{1}{\mu(\sigma B)}\int_{\sigma B}g^{q}(x)d\mu(x)\right)
^{1/q} \label{pqP}%
\end{equation}
holds for every ball $B$ such that $\sigma B\subset\Omega$, where
$f_{B}=\left(  \mu(B)\right)  ^{-1}\int_{B}f(x)d\mu(x)$. We may then refer to
$f$ as a ($p-q-$) Sobolev function and to $g$ as its gradient.
\end{definition}

We can now state our main results. We start with the following extension of
(\ref{ser1}).

\begin{theorem}
(cf. Theorem \ref{th:metric_bi} below) Let $B_{0}\subset X$ be a ball, and
suppose that $f$ and $g$ satisfy a $p-q-$Poincar\'{e} inequality on $4\sigma
B_{0}$. Then there exist constants $c_{1}>0,0<c_{2}\leq1,$ independent of
$B_{0}$, $f$ and $g,$ such that
\begin{equation}
t^{-\frac{1}{s}}\left(  \frac{1}{t}\int_{0}^{t}\left(  [f\chi_{B_{0}}]^{\ast
}(s)-[f\chi_{B_{0}}]^{\ast}(t)\right)  ^{p}ds\right)  ^{1/p}\leq c_{1}%
[(g^{q})^{\ast\ast}(t)]^{1/q},0<t<c_{2}\mu(B_{0}). \label{pq-bi}%
\end{equation}
\end{theorem}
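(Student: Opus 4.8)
The plan is to carry out in the metric setting the three‑step scheme sketched in the Introduction: (i) rewrite the $p$--$q$--Poincar\'{e} inequality as a pointwise inequality between a sharp maximal operator applied to $f$ and a Hardy--Littlewood type maximal operator applied to $g^{q}$; (ii) pass to decreasing rearrangements and control the right‑hand maximal operator by $(g^{q})^{\ast\ast}$; (iii) bound the left‑hand side of \eqref{pq-bi} from above by the rearrangement of the sharp maximal operator, by means of a Bennett--DeVore--Sharpley type oscillation inequality. All three steps are local, which is exactly why the Poincar\'{e} hypothesis is imposed on the dilated ball $4\sigma B_{0}$ and why the conclusion is restricted to $0<t<c_{2}\mu(B_{0})$.

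I would first replace $f$ by $f\chi_{B_{0}}$ and introduce, for $x\in B_{0}$,
\[
f^{\#}(x):=\sup\Bigl\{\tfrac{1}{r(B)}\Bigl(\tfrac{1}{\mu(B)}\int_{B}|f-f_{B}|^{p}\,d\mu\Bigr)^{1/p}\ :\ x\in B,\ \sigma B\subseteq 4\sigma B_{0}\Bigr\},
\]
\[
M_{q}g(x):=\sup\Bigl\{\Bigl(\tfrac{1}{\mu(\sigma B)}\int_{\sigma B}g^{q}\,d\mu\Bigr)^{1/q}\ :\ x\in B,\ \sigma B\subseteq 4\sigma B_{0}\Bigr\}.
\]
Then \eqref{pqP} asserts precisely that $f^{\#}(x)\le c_{P}\,M_{q}g(x)$ for $x\in B_{0}$. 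Since each admissible $\sigma B$ is itself a ball containing $x$ and contained in $4\sigma B_{0}$, we have $M_{q}g(x)\le\bigl(M(g^{q}\chi_{4\sigma B_{0}})(x)\bigr)^{1/q}$, $M$ being the non‑centered Hardy--Littlewood maximal operator. Taking decreasing rearrangements and using that $M$ is of weak type $(1,1)$ and of strong type $(\infty,\infty)$ — the covering lemma needed here being the one supplied by \cite{LP} — we get $\bigl(M(g^{q}\chi_{4\sigma B_{0}})\bigr)^{\ast}(t)\le C\,(g^{q})^{\ast\ast}(t)$, hence
\[
(f^{\#})^{\ast}(t)\ \le\ c_{P}\,(M_{q}g)^{\ast}(t)\ \le\ C\,\bigl[(g^{q})^{\ast\ast}(t)\bigr]^{1/q},\qquad t>0.
\]

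It then remains to dominate the left side of \eqref{pq-bi} by $(f^{\#})^{\ast}(t)$; that is, to prove a metric, $L^{p}$ version of the Bennett--DeVore--Sharpley inequality \cite{BDS}: for $0<t<c_{2}\mu(B_{0})$,
\[
t^{-\frac{1}{s}}\Bigl(\tfrac{1}{t}\int_{0}^{t}\bigl([f\chi_{B_{0}}]^{\ast}(s)-[f\chi_{B_{0}}]^{\ast}(t)\bigr)^{p}\,ds\Bigr)^{1/p}\ \le\ C\,(f^{\#})^{\ast}(t).
\]
The mechanism is the classical one: at height $\simeq[f\chi_{B_{0}}]^{\ast}(t)$ run a Calder\'{o}n--Zygmund decomposition of $f\chi_{B_{0}}$ based on the covering lemmas of \cite{LP}, producing balls $B_{i}$ with $\sigma B_{i}\subseteq 4\sigma B_{0}$ (the cutoff $t<c_{2}\mu(B_{0})$, $c_{2}\le1$, being what keeps the selected balls inside $4\sigma B_{0}$) on which the mean oscillation of $f$ controls $[f\chi_{B_{0}}]^{\ast\ast}(t)-[f\chi_{B_{0}}]^{\ast}(t)$ and whose total measure is $\simeq t$; summing the local estimates and using the dimension $s$ of $\mu$ to turn $r(B_{i})$ into $\mu(B_{i})^{1/s}$ and the accumulated measure into $t^{1/s}$ yields the displayed bound, the $L^{p}$ refinement of the classical argument producing the $L^{p}$ average in place of $f^{\ast\ast}-f^{\ast}$. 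Chaining the last two displays gives \eqref{pq-bi}.

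The hard part is this last step: the localized, $L^{p}$ Bennett--DeVore--Sharpley inequality with constants independent of $B_{0}$. In the absence of a dyadic grid the Calder\'{o}n--Zygmund selection must be run through the covering lemmas of \cite{LP}, and one must check carefully that the stopping‑time construction never leaves $4\sigma B_{0}$ — this is the common source of the dilation factor and of the threshold $c_{2}\mu(B_{0})$ — while the doubling property and the dimension $s$ of $\mu$ are what keep every constant uniform in $B_{0}$. By contrast, step (ii) is routine once the covering lemma of \cite{LP} is available, and step (i) is no more than a reformulation of the hypothesis.
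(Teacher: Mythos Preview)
Your proposal is correct and follows essentially the same three-step scheme as the paper: the pointwise maximal inequality derived from the Poincar\'{e} hypothesis, the rearrangement bound $(Mh)^{\ast}\le C\,h^{\ast\ast}$ for the Hardy--Littlewood maximal function, and the localized $L^{p}$ Bennett--DeVore--Sharpley inequality proved via the covering lemma of \cite{LP}. The only packaging difference is that the paper performs the conversion $r(B)\to\mu(B)^{1/s}$ \emph{first}, using the growth estimate \eqref{eq:meas_growth}, so that the sharp maximal function is defined with $\mu(B)^{1+p/s}$ in the denominator and the BDS step (Theorem~\ref{th:metric_bi_lhs}) becomes a clean, purely measure-theoretic statement independent of the Poincar\'{e} data; you instead keep $r(B)$ in the definition of $f^{\#}$ and perform the conversion inside the BDS argument. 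Two small corrections: the weak $(1,1)$ bound for $M$ on a doubling space is the standard Vitali argument, not the lemma of \cite{LP} (which is reserved for the BDS step); and the constant coming from \eqref{eq:meas_growth} does depend on the containing ball, so $c_{1}$ in fact depends on $B_{0}$, as the paper's Theorem~\ref{th:metric_bi} acknowledges.
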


Following \cite{MP}, given a r.i. space $Y,$ we introduce the spaces
$Y^{p}(\infty,s)$ (see Section 2 below) that contain all the functions for
which the $Y$-norm of the expression on the left-hand side of (\ref{pq-bi}) is
finite. The following sharp Sobolev embedding theorem then follows immediately.

\begin{theorem}
(cf. Theorem \ref{th:metric_embedding} below) Let $B_{0}\subset X$ be a ball,
and let $Y(X)$ be an r.i. space. Suppose that the operator $P_{\max\{p,q\}}$
(cf. (\ref{ser8}) below) is bounded on $Y(X)$. Then, if $f$ and $g$ satisfy a
$p-q-$Poincar\'{e} inequality on $4\sigma B_{0}$ with constant $c_{P}$, there
exists a constant $c=c(B_{0},c_{P},p,q,Y)>0$ such that
\[
\Vert f\chi_{B_{0}}\Vert_{Y^{p}(\infty,s)}\leq c(\Vert g\Vert_{Y}+\Vert
f\Vert_{Y}).
\]
\end{theorem}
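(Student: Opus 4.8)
The plan is to deduce the embedding directly from the pointwise rearrangement inequality (\ref{pq-bi}) supplied by the previous theorem. First I would unwind the definition of the space $Y^{p}(\infty,s)$: writing
\[
\|h\|_{Y^{p}(\infty,s)} = \left\| t^{-1/s}\left(\frac{1}{t}\int_{0}^{t}\bigl(h^{\ast}(s)-h^{\ast}(t)\bigr)^{p}\,ds\right)^{1/p}\right\|_{\bar{Y}},
\]
where $\bar{Y}$ is the representation space of $Y$ on $(0,\infty)$ (possibly restricted to $(0,c_{2}\mu(B_{0}))$, with the tail handled separately — see below). Applying the $\bar{Y}$-norm to both sides of (\ref{pq-bi}) with $h=f\chi_{B_{0}}$ gives immediately
\[
\|f\chi_{B_{0}}\|_{Y^{p}(\infty,s)} \le c_{1}\,\bigl\|[(g^{q})^{\ast\ast}(t)]^{1/q}\bigr\|_{\bar{Y}}
\]
on the interval $0<t<c_{2}\mu(B_{0})$.

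The next step is to control the right-hand side by $\|g\|_{Y}$. Here is where the hypothesis that $P_{\max\{p,q\}}$ is bounded on $Y$ enters: the operator $h\mapsto (h^{\ast\ast})$ together with the power $1/q$ should, after the standard change of variables $u=(g^{q})^{\ast}$ and the identity $(g^{q})^{\ast}=(g^{\ast})^{q}$, reduce to the boundedness of an averaging (Hardy-type) operator of the form $P_{\alpha}$ on $Y$; choosing the index $\max\{p,q\}$ is precisely what is needed so that this works simultaneously for the $p$ on the left and the $q$ on the right. Concretely, I expect to use $\bigl\|[(g^{q})^{\ast\ast}]^{1/q}\bigr\|_{\bar Y}\le c\,\|P_{\max\{p,q\}}\|_{Y\to Y}\,\|g\|_{Y}$, invoking the interpolation/rescaling properties of r.i.\ norms under the map $\varphi\mapsto\varphi^{1/q}$. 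This is a routine but slightly delicate manipulation of rearrangement-invariant norms, and it is the step I would write out most carefully.

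Finally, I would deal with the truncation to the ball and the restriction of the $t$-interval. Since $f\chi_{B_{0}}$ is supported in $B_{0}$, its decreasing rearrangement vanishes for $t\ge\mu(B_{0})$, so the $Y^{p}(\infty,s)$-norm only sees $t<\mu(B_{0})$; the portion $c_{2}\mu(B_{0})\le t<\mu(B_{0})$ not covered by (\ref{pq-bi}) is estimated trivially, bounding $f^{\ast}(s)-f^{\ast}(t)$ by $f^{\ast}(s)\le f^{\ast\ast}(s)$ and using that $P_{\max\{p,q\}}$ bounded on $Y$ implies in particular (on a finite measure piece) control of $\|f^{\ast\ast}\|_{\bar Y}$ by $c\|f\|_{Y}$; this is the source of the $\|f\|_{Y}$ term on the right. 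Collecting the two contributions yields
\[
\|f\chi_{B_{0}}\|_{Y^{p}(\infty,s)}\le c(B_{0},c_{P},p,q,Y)\,(\|g\|_{Y}+\|f\|_{Y}),
\]
as claimed. The only real obstacle is the second step — checking that "$P_{\max\{p,q\}}$ bounded on $Y$'' is exactly the right hypothesis to absorb both the $\ast\ast$-averaging and the $p,q$ powers — and I would isolate that as a short lemma on r.i.\ spaces (essentially the content of \cite{MP}) before assembling the proof.
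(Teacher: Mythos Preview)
Your plan is the paper's proof: split the $t$-range at $c_{2}\mu(B_{0})$, use (\ref{pq-bi}) for small $t$, bound the large-$t$ piece trivially, and invoke boundedness of $P_{\max\{p,q\}}$. Two places where you make it harder than necessary or misstate things.

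First, there is no ``routine but slightly delicate manipulation'' or separate lemma needed for the $g$-term. By the identity $(g^{q})^{\ast}=(g^{\ast})^{q}$, one has $[(g^{q})^{\ast\ast}(t)]^{1/q}=P_{q}(g)(t)$ \emph{by the very definition} (\ref{ser8}) of $P_{q}$. Since $P_{a}\le P_{b}$ pointwise for $a\le b$, boundedness of $P_{\max\{p,q\}}$ on $Y$ gives boundedness of $P_{q}$, and hence $\bigl\|[(g^{q})^{\ast\ast}]^{1/q}\bigr\|_{Y}\le c\|g\|_{Y}$ in one line.

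Second, your claim that ``the $Y^{p}(\infty,s)$-norm only sees $t<\mu(B_{0})$'' is false: for $t\ge\mu(B_{0})$ one has $[f\chi_{B_{0}}]^{\ast}(t)=0$, but the oscillation integral $\frac{1}{t}\int_{0}^{t}([f\chi_{B_{0}}]^{\ast}(s))^{p}\,ds$ does not vanish. The paper's tail argument is simply that for \emph{every} $t\ge c_{2}\mu(B_{0})$ one has $t^{-1/s}\le (c_{2}\mu(B_{0}))^{-1/s}$, so the left side of (\ref{pq-bi}) is bounded by a constant depending on $B_{0}$ times $P_{p}(f\chi_{B_{0}})(t)$; then boundedness of $P_{p}$ on $Y$ finishes. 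This yields the pointwise bound
\[
t^{-1/s}\Bigl(\tfrac{1}{t}\int_{0}^{t}([f\chi_{B_{0}}]^{\ast}(s)-[f\chi_{B_{0}}]^{\ast}(t))^{p}\,ds\Bigr)^{1/p}\le c\bigl[P_{q}(g)(t)+P_{p}(f\chi_{B_{0}})(t)\bigr]
\]
for all $t>0$, after which one applies $\|\cdot\|_{Y}$.
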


We also provide a new application of our rearrangement inequality
(\ref{pq-bi}) to the study of the so called functional forms of the
Faber-Krahn inequalities in metric spaces (cf. Section 4 below). We now
illustrate these ideas in the classical Euclidean case. For example, using
$\int_{f^{\ast}(t)}^{\infty}\lambda_{f}(u)=t(f^{\ast\ast}(t)-f^{\ast}(t)),$
and H\"{o}lder's inequality, we see that (\ref{ser1}) implies\footnote{Here
and in what follows the symbol $\approx$ denotes equivalence modulo constants,
and the symbol $\preceq$ denotes smaller or equal modulo constants.}%
\begin{align*}
\int_{f^{\ast}(t)}^{\infty}\lambda_{f}(u) &  \preceq t^{1/n}\int_{0}%
^{t}\left|  \nabla f\right|  ^{\ast}(u)du\\
&  \preceq t^{1/n}\left(  \int_{0}^{t}\left|  \nabla f\right|  ^{\ast}%
(u)^{p}du\right)  ^{1/p}t^{1/p^{\prime}}.
\end{align*}
Now let $t=\Vert f\Vert_{0}=\int_{\{\left|  f\right|  >0\}}dx$ , and observe
that then $f^{\ast}(t)=0$, and $\int_{f^{\ast}(t)}^{\infty}\lambda
_{f}(u)=\left\|  f\right\|  _{1}.$ We have thus obtained the following
Faber-Krahn inequality%
\begin{equation}
\left\|  f\right\|  _{1}\preceq\Vert f\Vert_{0}^{1/n+1-1/p}\left\|  \nabla
f\right\|  _{p}.\label{samba}%
\end{equation}
More generally, if a $p-q-$Poincar\'{e} inequality holds then we can use
(\ref{pq-bi}) and a similar argument to prove\footnote{For a different
approach to (\ref{samba}) we refer to \cite{MV}. For a far reaching
generalization of Theorem \ref{marcao}, using the beautiful ideas of Jawerth
\cite{BJ}, see \cite{BJMM}.}

\begin{theorem}
\label{marcao}(cf. Theorem \ref{th:metric_faber} below) Let $B_{0}\subset X$
be a ball, and let $f$ be a function with $\text{supp}(f)\subset B_{0}$. Let
$Z(X)$ be an r.i. space and let $\phi_{Z^{\prime}}$ denote the fundamental
function of its associate space $Z^{\prime}(X).$

(i) Let $f$ be a $p-q-$Sobolev function and let $g$ be a gradient of $f$. If
$\Vert f\Vert_{0}<c_{2}\mu(B_{0})$, where $c_{2}$ is the constant of Theorem
\ref{th:metric_bi}, then
\begin{equation}
\Vert f\Vert_{L^{p}(X)}\leq c\left[  \Vert g^{q}\Vert_{Z(X)}\phi_{Z^{\prime}%
}(\Vert f\Vert_{0})\right]  ^{1/q}\Vert f\Vert_{0}^{\frac{s+p}{sp}-\frac{1}%
{q}}.
\end{equation}

(ii) Let $f$ be a $1-q-$Sobolev function, $q>s,$ and let $g$ be a gradient of
$f$. If $\Vert f\Vert_{0}<c_{2}\mu(B_{0})$, where $c_{2}$ is the constant of
Theorem \ref{th:metric_bi}, then
\begin{equation}
\Vert f\Vert_{L^{\infty}(X)}\leq c\left[  \Vert g^{q}\Vert_{Z(X)}%
\phi_{Z^{\prime}}(\Vert f\Vert_{0})\right]  ^{1/q}\Vert f\Vert_{0}^{1/s-1/q}.
\end{equation}
\end{theorem}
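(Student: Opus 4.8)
The plan is to derive both parts of Theorem~\ref{marcao} directly from the rearrangement inequality (\ref{pq-bi}) of Theorem~\ref{th:metric_bi}, evaluated at the distinguished point $t=\Vert f\Vert_0$, together with Hölder's inequality in the r.i. space $Z(X)$. The starting observation is that, since $\supp(f)\subset B_0$, we have $f\chi_{B_0}=f$ and hence $[f\chi_{B_0}]^{\ast}=f^{\ast}$, while $f^{\ast}(t)=0$ for every $t\ge\Vert f\Vert_0$, because $\Vert f\Vert_0=\mu(\{|f|>0\})$ is exactly the measure of $\supp(f)$. The hypothesis $\Vert f\Vert_0<c_2\mu(B_0)$ is precisely what makes (\ref{pq-bi}) available at $t=\Vert f\Vert_0$ and at all smaller values of $t$. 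Throughout, I also record the elementary Hölder-type bound (via the Luxemburg representation of $Z(X)$): for every $t$,
\[
\int_0^t (g^q)^{\ast}(s)\,ds\le\Vert g^q\Vert_{Z(X)}\,\phi_{Z'}(t),\qquad\text{i.e.}\qquad (g^q)^{\ast\ast}(t)\le t^{-1}\Vert g^q\Vert_{Z(X)}\,\phi_{Z'}(t).
\]

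\emph{Part (i).} I would evaluate (\ref{pq-bi}) at $t=\Vert f\Vert_0$. Since $f^{\ast}(\Vert f\Vert_0)=0$ and $\int_0^{\Vert f\Vert_0}f^{\ast}(s)^p\,ds=\Vert f\Vert_{L^p(X)}^p$, the left-hand side collapses to $\Vert f\Vert_0^{-1/s-1/p}\Vert f\Vert_{L^p(X)}$, giving
\[
\Vert f\Vert_{L^p(X)}\le c_1\,\Vert f\Vert_0^{\frac1s+\frac1p}\,\bigl[(g^q)^{\ast\ast}(\Vert f\Vert_0)\bigr]^{1/q}.
\]
Inserting the Hölder bound $(g^q)^{\ast\ast}(\Vert f\Vert_0)\le\Vert f\Vert_0^{-1}\Vert g^q\Vert_{Z(X)}\phi_{Z'}(\Vert f\Vert_0)$ and using $\tfrac1s+\tfrac1p-\tfrac1q=\tfrac{s+p}{sp}-\tfrac1q$ yields (i).

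\emph{Part (ii).} Here $p=1$, so (\ref{pq-bi}) reads $f^{\ast\ast}(t)-f^{\ast}(t)\le c_1\,t^{1/s}\bigl[(g^q)^{\ast\ast}(t)\bigr]^{1/q}$ for $0<t<c_2\mu(B_0)$. We may assume the claimed right-hand side is finite (otherwise nothing to prove); then part (i) with $p=1$ gives $f\in L^1(X)$, so $f^{\ast}\in L^1_{\mathrm{loc}}(0,\infty)$, $t\mapsto tf^{\ast\ast}(t)$ is locally absolutely continuous with derivative $f^{\ast}(t)$, and $f^{\ast\ast}(0^+)=\Vert f\Vert_{L^\infty(X)}$. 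Using $-\tfrac{d}{dt}f^{\ast\ast}(t)=(f^{\ast\ast}(t)-f^{\ast}(t))/t$ and integrating on $(0,\Vert f\Vert_0)$,
\[
\Vert f\Vert_{L^\infty(X)}=f^{\ast\ast}(\Vert f\Vert_0)+\int_0^{\Vert f\Vert_0}\frac{f^{\ast\ast}(t)-f^{\ast}(t)}{t}\,dt\le f^{\ast\ast}(\Vert f\Vert_0)+c_1\int_0^{\Vert f\Vert_0}t^{\frac1s-1}\bigl[(g^q)^{\ast\ast}(t)\bigr]^{1/q}\,dt.
\]
In the integral, bound $[(g^q)^{\ast\ast}(t)]^{1/q}\le t^{-1/q}[\Vert g^q\Vert_{Z(X)}\phi_{Z'}(t)]^{1/q}\le t^{-1/q}[\Vert g^q\Vert_{Z(X)}\phi_{Z'}(\Vert f\Vert_0)]^{1/q}$, using that $\phi_{Z'}$ is nondecreasing; then $\int_0^{\Vert f\Vert_0}t^{1/s-1/q-1}\,dt=\tfrac{sq}{q-s}\Vert f\Vert_0^{1/s-1/q}$ is finite precisely because $q>s$ — this is the only place the hypothesis $q>s$ enters. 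For the boundary term, evaluate (\ref{pq-bi}) at $t=\Vert f\Vert_0$ once more: since $f^{\ast}(\Vert f\Vert_0)=0$, $f^{\ast\ast}(\Vert f\Vert_0)\le c_1\Vert f\Vert_0^{1/s}[(g^q)^{\ast\ast}(\Vert f\Vert_0)]^{1/q}\le c_1[\Vert g^q\Vert_{Z(X)}\phi_{Z'}(\Vert f\Vert_0)]^{1/q}\Vert f\Vert_0^{1/s-1/q}$. Adding the two contributions gives (ii), with a constant of the form $c_1(1+\tfrac{sq}{q-s})$.

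I expect no deep obstacle once (\ref{pq-bi}) is in hand: the only delicate points are the justification of the differentiation/integration identity for $f^{\ast\ast}$ near the origin (local absolute continuity of $t\mapsto tf^{\ast\ast}(t)$, the identification $f^{\ast\ast}(0^+)=\Vert f\Vert_{L^\infty}$, and the disposal of the case of an infinite right-hand side via monotone convergence) and the integrability at $t=0$ in part (ii), which is exactly the role played by the condition $q>s$. Everything else is routine bookkeeping with rearrangements, the fundamental function, and Hölder's inequality in rearrangement invariant spaces, exactly as in the Euclidean derivation of (\ref{samba}) sketched above.
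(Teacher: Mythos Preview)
Your proposal is correct and follows essentially the same route as the paper: evaluate the symmetrization inequality (\ref{pq-bi}) at $t=\Vert f\Vert_0$ for part (i), and for part (ii) use the identity $-\tfrac{d}{dt}f^{\ast\ast}(t)=(f^{\ast\ast}(t)-f^{\ast}(t))/t$, integrate over $(0,\Vert f\Vert_0)$, and control the resulting integral via H\"older in $Z$ together with the monotonicity of $\phi_{Z'}$. The only cosmetic difference is the treatment of the boundary term $f^{\ast\ast}(\Vert f\Vert_0)$: the paper rewrites it as $\Vert f\Vert_{L^1}/\Vert f\Vert_0$ and quotes part (i) with $p=1$, whereas you re-apply (\ref{pq-bi}) directly at $t=\Vert f\Vert_0$; the two computations are identical once unwound.
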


\section{The basic symmetrization inequality in metric spaces}

We start with a definition.

\begin{definition}
\label{de:homogeneous_space} An homogeneous space consists of a metric space
$X$ and a Borel measure $\mu$ on $X,$ such that $0<\mu(B(x,r))<\infty,$ for
all $x\in X,r>0,$ and, moreover, the measure $\mu$ satisfies a doubling
condition:
\begin{equation}
\mu(B(x,2r))\leq c_{d}\mu(B(x,r)), \label{eq:doubling}%
\end{equation}
for all $x\in X$ and $r>0$. If $c_{d}$ is the smallest constant in
(\ref{eq:doubling}) then the number $s=\log_{2}c_{d}$ is called the doubling
order or the dimension of $\mu$.
\end{definition}

\begin{remark}
Note that if we fix a ball $\widetilde{B}\subset X$, then by iterating
(\ref{eq:doubling}) (cf. Lemma 14.6 in \cite{HK}) we can find a positive
constant $c$ (possibly depending on $\widetilde{B}$) such that for every ball
$B\subset\widetilde{B}$ we have
\begin{equation}
\frac{\mu(B)}{\mu(\widetilde{B})}\geq cr(B)^{s}. \label{eq:meas_growth}%
\end{equation}
\end{remark}

In what follows given a ball $B=B(x,r)$, $\varrho B$ will denote the ball
concentric with $B,$ whose radius is $\varrho r$.

A rearrangement invariant (r.i.) space $Y=Y(X)$ is a Banach function space of
$\mu-$measurable functions on $X$ endowed with a norm $\Vert\cdot\Vert_{Y}$
such that if $f\in Y$ and $g^{\ast}=f^{\ast}$ then $g\in Y$ and $\Vert
g\Vert_{Y}=\Vert f\Vert_{Y}$. The fundamental function $\phi_{Y}$ of $Y$ is
defined for $t$ in the range of $\mu$ by
\[
\phi_{Y}(t)=\Vert\chi_{E}\Vert_{Y},
\]
where $E$ is any subset of $X$ with $\mu(E)=t$. Recall that any
resonant\footnote{See \cite[Definition II.2.3]{BS}.} r.i. space $Y$ has a
representation as a function space $Y{\symbol{94}}(0,\infty)$ such that (cf.
\cite[Theorem II.4.10]{BS})
\[
\left\|  f\right\|  _{Y(X)}=\left\|  f^{\ast}\right\|  _{Y{\symbol{94}%
}(0,\infty)}.
\]
Since the measure space will be always clear from the context it is convenient
to ``drop the hat'' and use the same letter $Y$ to indicate the different
versions of the space $Y$ that we use.

Let $P$ denote the usual Hardy operator $P:f(t)\mapsto t^{-1}\int_{0}%
^{t}f^{\ast}(s)ds.$ The operators $P_{p}$, $p\geq1,$ are defined by
\begin{equation}
(P_{p}f)(t)=[P((f^{\ast})^{p})(t)]^{1/p}. \label{ser8}%
\end{equation}

\begin{definition}
Let $Y$ be a r.i. space, and let $p\geq1$ and $r>0$. Let
\[
Y^{p}(\infty,r)=\{f:\Vert f\Vert_{Y^{p}(\infty,r)}=\Vert t^{-1/r}\left(
\frac{1}{t}\int_{0}^{t}[f^{\ast}(s)-f^{\ast}(t)]^{p}ds\right)  ^{1/p}\Vert
_{Y}<\infty\}.
\]
\end{definition}

\begin{remark}
Under suitable assumptions the expression defining the ``norm''\footnote{In
\cite{CGMP} sets similar to $Y^{p}(\infty,r)$ are considered and conditions
are given for these sets to be equivalent to normed spaces.} of the
$Y^{p}(\infty,r)$ spaces can be simplified. For example, suppose that $p$ and
$r$ are such that
\begin{equation}
1\leq p<\frac{p_{Y}r}{p_{Y}-r}, \label{eq:space_coin_ass2}%
\end{equation}
where $p_{Y}$ is the lower Boyd index $p_{Y}$ of $Y,$ and suppose, moreover
(cf. \cite{MP}),
\begin{equation}
\int_{1}^{\infty}s^{1/r}d_{Y}\left(  \frac{1}{s}\right)  \frac{ds}{s}<\infty,
\label{eq:space_coin_ass1}%
\end{equation}
where $d_{Y}(s)$ is the norm of the dilation operator $D_{s}:f(\cdot)\mapsto
f(\cdot s).$ Then, for $f$ with $f^{\ast\ast}(\infty)=0$, we have
\[
\Vert f\Vert_{Y^{p}(\infty,r)}\approx\Vert f\Vert_{Y^{1}(\infty,r)}.
\]
\end{remark}

\begin{proof}
From (\ref{eq:space_coin_ass1}) it follows that, for $f^{\ast\ast}(\infty)=0,$
we have (cf. \cite[Lemma 2.6]{MP}),
\begin{equation}
\Vert t^{-1/r}(f^{\ast\ast}(t)-f^{\ast}(t))\Vert_{Y}\approx\Vert
t^{-1/r}f^{\ast\ast}(t)\Vert_{Y}. \label{eq:space_coin_ineq1}%
\end{equation}
On the other hand, since $\Vert D_{a}[f(t)t^{-1/r}]\Vert_{Y}=a^{-1/r}%
\Vert\lbrack D_{a}f(t)]t^{-1/r}\Vert_{Y}$, we have
\[
\Vert D_{a}[f(t)t^{-1/r}]\Vert_{Y}\leq ca^{-1/p}\Vert f(t)t^{-1/r}\Vert_{Y}%
\]
if and only if
\[
\Vert D_{a}[f(t)]t^{-1/r}\Vert_{Y}\leq ca^{-1/p+1/r}\Vert f(t)t^{-1/r}%
\Vert_{Y}.
\]
Consequently, if we let $Y(t^{-1/r})$ be the space defined by the norm $\Vert
f(t)t^{-1/r}\Vert_{Y}$ then the lower Boyd index of $Y(t^{-1/r})$ is equal to
$p_{Y}r/(p_{Y}-r)$, where $p_{Y}$ is the lower Boyd index of $Y$. Now, in view
of (\ref{eq:space_coin_ass2}), with $r=p,$ it follows from \cite[Theorem 2
(i)]{MO}$,$ that the operator $P_{p}$ is continuous on $Y(t^{-1/r})$. Thus,
\[
\Vert\left(  \frac{1}{t}\int_{0}^{t}[f^{\ast}(s)]^{p}ds\right)  ^{1/p}%
t^{-1/r}\Vert_{Y}\leq c\Vert f^{\ast}(t)t^{-1/r}\Vert_{Y}\leq\Vert f^{\ast
\ast}(t)t^{-1/r}\Vert_{Y}.
\]
Combining the last inequality with (\ref{eq:space_coin_ineq1}) we obtain,%
\[
\Vert f\Vert_{Y^{p}(\infty,r)}\lesssim\Vert f\Vert_{Y^{1}(\infty,r)}.
\]
The reverse inequality follows readily from H\"{o}lder's inequality.
\end{proof}

The expression on the left-hand side of (\ref{ser3}) is a modification of the
well-known sharp maximal operator of Fefferman-Stein (cf. \cite{BS}) which is
defined for $f\in L_{\text{loc}}^{1}(X)$ and $p,q\geq1,$ by
\[
f_{B_{0},p,q}^{\#}(x)=\sup_{x\in B\subset B_{0}\text{ a ball}}\left(  \frac
{1}{\mu(B)^{q}}\int_{B}|f(y)-f_{B}|^{p}d\mu(y)\right)  ^{1/p}.
\]

For the proof of our symmetrization inequality we need the following version
of a covering lemma from \cite{LP}.

\begin{lemma}
\label{le:covering} There exist positive constants $c,\lambda,$ with
$\lambda<1,$ such that for any ball $B,$ and any open set $E\subset B$ with
$\mu(E)\leq\lambda\mu(B)$, there exists a countable family of balls
$\{B_{i}\}_{i=1}^{\infty}$ such that

\begin{enumerate}
\item $B_{i}\subset4B$ for $i=1,\dots$

\item $E\subset\bigcup_{i=1}^{\infty} B_{i}$

\item $\sum_{i}\mu(B_{i})\leq c \mu(E)$

\item $\mu(B_{i}\cap E)\leq(1/2)\mu(B_{i}\cap B)$ for $i=1,\dots$
\end{enumerate}
\end{lemma}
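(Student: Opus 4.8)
The plan is a Calder\'{o}n--Zygmund type stopping-time argument on the open set $E$, followed by the basic ($5r$-) covering lemma, which is available in any doubling metric space. (I should say at the outset that since this lemma is imported from \cite{LP}, the point below is as much to locate the real difficulty as to give a self-contained argument.)

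Fix the ball $B=B(x_{0},r(B))$ and let $\lambda<\tfrac{1}{2}$ be the constant in the statement. For $x\in E$ I would monitor how fast the relative density of $E$ in balls about $x$ decays with the radius. Since $E$ is open, for $r$ small $B(x,r)\subset E\subset B$, so $\mu(B(x,r)\cap E)=\mu(B(x,r)\cap B)=\mu(B(x,r))$; on the other hand, once $r\geq 2r(B)$ we have $B\subset B(x,r)$ (because $x\in B$), whence $\mu(B(x,r)\cap E)/\mu(B(x,r)\cap B)=\mu(E)/\mu(B)\leq\lambda<\tfrac{1}{2}$. As $r\mapsto\mu(B(x,r)\cap E)$ and $r\mapsto\mu(B(x,r)\cap B)$ are nondecreasing and left-continuous (continuity of $\mu$ from below, since $B(x,r)=\bigcup_{r'<r}B(x,r')$), I would set
\[
r(x)=\inf\Bigl\{r>0:\ \mu\bigl(B(x,r)\cap E\bigr)\leq\tfrac{1}{2}\,\mu\bigl(B(x,r)\cap B\bigr)\Bigr\},
\]
so $0<r(x)\leq 2r(B)$; after a harmless enlargement of $r(x)$ (still $\leq 3r(B)$) to straddle a possible jump of the two measures across the critical sphere, I may assume the stopping ball $B^{x}:=B(x,r(x))$ itself satisfies $\mu(B^{x}\cap E)\leq\tfrac{1}{2}\mu(B^{x}\cap B)$, while every concentric ball of strictly smaller radius has relative density $>\tfrac{1}{2}$. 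Since $x\in B$ and $r(x)\leq 3r(B)$ we get $B^{x}\subset B(x_{0},4r(B))=4B$.

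Next I would apply the $5r$-covering lemma to $\{\tfrac{1}{5}B^{x}\}_{x\in E}$ (radii bounded by $\tfrac{3}{5}r(B)$): it produces a countable pairwise disjoint subfamily $\{\tfrac{1}{5}B^{x_{i}}\}_{i}$ whose $5$-fold dilates cover $\bigcup_{x}\tfrac{1}{5}B^{x}\supset E$. Put $B_{i}:=B^{x_{i}}$ and $D_{i}:=\tfrac{1}{5}B^{x_{i}}$. Then (1) and (2) are immediate, the $D_{i}$ are disjoint, and (4) is the stopping rule read off at $x=x_{i}$. For the packing bound (3) I would combine three facts: $\mu(B_{i})=\mu(5D_{i})\leq c_{d}^{\,3}\mu(D_{i})$ by the doubling condition (\ref{eq:doubling}); the absence of stopping at the radius $r(x_{i})/5<r(x_{i})$, which gives $\mu(D_{i}\cap E)>\tfrac{1}{2}\mu(D_{i}\cap B)$; and disjointness of the $D_{i}$ with $D_{i}\cap E\subset E$, which gives $\sum_{i}\mu(D_{i}\cap B)<2\sum_{i}\mu(D_{i}\cap E)\leq 2\mu(E)$. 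Thus (3) would reduce to a lower bound $\mu(D_{i}\cap B)\geq c\,\mu(D_{i})$ with $c=c(c_{d})>0$.

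That last lower bound is the crux, and the reason the lemma is genuinely delicate: in a general homogeneous space $\mu(B(x,r)\cap B)$ need not be comparable to $\mu(B(x,r))$ even for $x\in B$, because a ball whose centre lies near $\partial B$ can carry almost all of its mass outside $B$, and enlarging such a ball never triggers the stopping rule, which only monitors $E$ and $B$. To control this one cannot stop once and for all: one would carry out the construction inside a fixed dilate of $B$ and first peel off a Whitney-type decomposition of $B$ (equivalently, iterate the stopping time over generations), so that the balls one finally keeps do have a fixed fraction of their mass inside $B$; the hypothesis $\mu(E)\leq\lambda\mu(B)$ with $\lambda$ small, together with the freedom to replace the dilation constant $5$ by any fixed $\varrho>1$ and to rescale the threshold in the definition of $r(x)$ accordingly, is what lets one emerge with the precise constant $\tfrac{1}{2}$ in (4) and the factor $4$ in (1). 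The auxiliary tools — the $5r$-covering lemma and the iteration of (\ref{eq:doubling}) — are standard.
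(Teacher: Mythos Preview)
The paper's own ``proof'' of this lemma is a single sentence deferring to \cite[Lemma~3.1]{LP}; there is no argument in the paper to compare against. Your sketch is therefore already more detailed than what the paper supplies, and the strategy you outline---a Calder\'on--Zygmund stopping time on the density of $E$ in balls centred at points of $E$, followed by the $5r$-covering lemma to pass to a countable subfamily with controlled overlap---is precisely the approach of Lerner--P\'erez that the paper is invoking.

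You also put your finger on the one genuine difficulty. Properties (1), (2), (4) come for free from the stopping rule and the trivial bound $r(x)\le 2r(B)$, while the packing estimate (3) reduces, via doubling and disjointness of the $D_i$, to the lower bound $\mu(D_i\cap B)\gtrsim\mu(D_i)$. As you say, this is \emph{not} automatic in a general homogeneous space: a ball centred near the boundary of $B$ can carry most of its mass outside $B$, and nothing in the stopping rule $\mu(B(x,r)\cap E)\le\tfrac12\mu(B(x,r)\cap B)$ prevents that. Your proposed remedy---working inside a fixed dilate of $B$ and iterating the construction so that the selected balls are forced to sit well inside the ambient ball---is the right kind of fix, and it (or an equivalent device, such as running the stopping time on the ratio $\mu(B(x,r)\cap E)/\mu(B(x,r))$ and then recovering (4) separately, or using a dyadic Christ-cube decomposition in place of metric balls) is what makes the argument in \cite{LP} go through. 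In short: same approach as the paper's cited source, and your caveat about where the real work lies is accurate.
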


\begin{proof}
Follows readily from the proof of \cite[Lemma 3.1]{LP}.
\end{proof}

\begin{theorem}
\label{th:metric_bi_lhs}There exist positive constants $c_{1},c_{2}$, such
that, for any ball $B_{0}\subset X$, $p,q\geq1,$ and for all $f\in
L_{\text{loc}}^{1},$ $0<t<c_{2}\mu(B_{0}),$ we have%
\begin{equation}
t^{-q/p}\left(  \int_{0}^{t}([f\chi_{B_{0}}]^{\ast}(s)-[f\chi_{B_{0}}]^{\ast
}(t))^{p}ds\right)  ^{1/p}\leq c_{1}(f_{4B_{0},p,q}^{\#})^{\ast}(t).
\label{eq:metric_bi_lhs}%
\end{equation}
\end{theorem}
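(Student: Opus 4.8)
The plan is to establish the pointwise-to-rearrangement passage in the spirit of the Euclidean inequality (\ref{ser5}), using the covering Lemma \ref{le:covering} as the substitute for the classical Calderón–Zygmund / Besicovitch covering. Fix a ball $B_0$ and write $h=f\chi_{B_0}$. First I would dispose of the ``$-f^\ast(t)$'' subtraction: for $0<s<t$ one has $h^\ast(s)-h^\ast(t)=\int$ of the layer-cake over the level set $\{h^\ast(t)<|h|\le h^\ast(s)\}$, so it suffices to bound $t^{-q/p}\bigl(\int_0^t (h^\ast(s)-h^\ast(t))^p\,ds\bigr)^{1/p}$ by estimating, for the truncation $h_t:=(\,|h|-h^\ast(t)\,)_+\,\mathrm{sgn}\,h$, the quantity $\|h_t\|$ on a set of measure $t$. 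Concretely, set $\lambda=h^\ast(t)$ and $E=\{x\in B_0:|f(x)-(f)_{?}|>\lambda\}$-type sets; the key point is that $\mu(\{|h|>\lambda\})\le t$, and by choosing $c_2$ small relative to the $\lambda$ of Lemma \ref{le:covering} (note the unfortunate clash of notation: the covering constant and the level value) we guarantee $\mu(E)\le\lambda_{\mathrm{cov}}\mu(B_0)$, so the covering lemma applies to the open set $E$ (replace $E$ by a slightly larger open set of comparable measure using outer regularity of $\mu$).

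Next I would run the iteration. Apply Lemma \ref{le:covering} to get balls $\{B_i\}$ with $B_i\subset 4B_0$, $E\subset\bigcup B_i$, $\sum_i\mu(B_i)\le c\,\mu(E)$, and the crucial ``good-$\lambda$'' property $\mu(B_i\cap E)\le\tfrac12\mu(B_i)$ (using $\mu(B_i\cap B_0)\le\mu(B_i)$). On each $B_i$ one writes $|f-f_{B_i}|\ge |f|-|f_{B_i}|$, and since $B_i$ is at most half-filled by $E$ the average $|f_{B_i}|$ cannot exceed $\lambda$ by too much — a standard computation gives $|f_{B_i}|\le \lambda + C\bigl(\mu(B_i)^{-p}\int_{B_i}|f-f_{B_i}|^p\bigr)^{1/p}\mu(B_i)^{1/p}$, hence on $B_i\cap E$,
\[
(|f(x)|-\lambda)\ \le\ |f(x)-f_{B_i}|\ +\ C\,\mu(B_i)^{-1+1/p}\Bigl(\int_{B_i}|f-f_{B_i}|^p\,d\mu\Bigr)^{1/p}.
\]
Raising to the $p$-th power, integrating over $B_i\cap E$, summing in $i$, and using $\sum\mu(B_i)\le c\mu(E)\le c\,t$ together with the definition of $f^{\#}_{4B_0,p,q}$ — note every $B_i\subset 4B_0$, so $\bigl(\mu(B_i)^{-q}\int_{B_i}|f-f_{B_i}|^p\bigr)^{1/p}\le \inf_{B_i}f^{\#}_{4B_0,p,q}$ — yields
\[
\int_E (|f|-\lambda)^p\,d\mu\ \le\ C\sum_i \mu(B_i)^{\,q}\,\bigl(f^{\#}_{4B_0,p,q}\bigr)_{B_i}^{p}+\cdots\ \le\ C\,t^{q}\,\bigl[(f^{\#}_{4B_0,p,q})^\ast(t)\bigr]^{p},
\]
where in the last step I use $\sum_i\mu(B_i)^q\le(\sum_i\mu(B_i))^q\le(ct)^q$ when $q\ge1$ (superadditivity of $x\mapsto x^q$), and that on a set of total measure $\le ct$ the function $f^{\#}_{4B_0,p,q}$ is controlled by its decreasing rearrangement at a comparable point, after adjusting constants. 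Since $\int_0^t(h^\ast(s)-h^\ast(t))^p\,ds\le \int_E(|f|-\lambda)^p\,d\mu$ (the left side is the rearrangement of the truncated function, supported on a set of measure $\le t$), taking $p$-th roots and dividing by $t^{q/p}$ gives (\ref{eq:metric_bi_lhs}).

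The main obstacle, and the place requiring care, is the ``only-half-filled'' argument that lets one replace the uncentered average $f_{B_i}$ by the level $\lambda$: one must verify that the complementary mass $\mu(B_i\setminus E)\ge\tfrac12\mu(B_i)$ forces $|f|\le\lambda$ on a definite portion of $B_i$, so that $|f_{B_i}|$ is pinned near $\lambda$ up to the oscillation term — this is exactly the metric-space analogue of the Bennett–DeVore–Sharpley estimate (\ref{ser5}) and is where the doubling hypothesis (through $\mu(B_i\cap B_0)\le\mu(B_i)$ and comparability of $\mu$ on $B_i$ versus $4B_0$) enters. A secondary technical point is bookkeeping the constant $c_2$: it must be chosen so that $t<c_2\mu(B_0)$ forces $\mu(\{|h|>h^\ast(t)\})\le t<\lambda_{\mathrm{cov}}\,\mu(B_0)$ uniformly, which is immediate but must be stated. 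Everything else — the layer-cake identity, outer regularity to pass to an open $E$, and Hölder/superadditivity bookkeeping — is routine.
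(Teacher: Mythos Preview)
Your overall architecture matches the paper's: reduce to the level set $E=\{x\in B_0:|f(x)|>[f\chi_{B_0}]^{\ast}(t)\}$, apply Lemma~\ref{le:covering}, split off the oscillation term $\sum_j\int_{B_j}|f-f_{B_j}|^p$, and control $|f_{B_j}-\lambda|$ using property~(4). That part is fine, and your handling of the $|f_{B_j}-\lambda|$ term (bounding it by the $L^p$ oscillation on $B_j$ via the half-filled property) is a legitimate variant of the paper's measure-swap argument.

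The genuine gap is in your last step. You arrive at $\sum_i\mu(B_i)^q\bigl(\inf_{B_i}f^{\#}_{4B_0,p,q}\bigr)^p$ and then assert that ``on a set of total measure $\le ct$ the function $f^{\#}_{4B_0,p,q}$ is controlled by its decreasing rearrangement at a comparable point.'' This does not follow: knowing only that $\sum_i\mu(B_i)\le ct$ gives you no control on $\inf_{B_i}f^{\#}_{4B_0,p,q}$ for any individual $i$. A single ball $B_i$ could lie entirely inside the region where $f^{\#}_{4B_0,p,q}$ is enormous, and nothing in your covering prevents this. So the passage to $(f^{\#}_{4B_0,p,q})^{\ast}(t)$ is unjustified.

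The paper closes this gap by covering not just $E$ but $E\cup F$, where
\[
F=\{x\in B_0: f^{\#}_{4B_0,p,q}(x)>(f^{\#}_{4B_0,p,q}\chi_{B_0})^{\ast}(t)\},
\]
inside an open set $\Omega$ of measure at most $3t$ (hence $c_2=\lambda/3$). Property~(4) of Lemma~\ref{le:covering} then forces $\mu(B_j\cap\Omega)\le\tfrac12\mu(B_j\cap B_0)$, so each $B_j$ contains a point $x_j\in B_0\setminus F$. At such a point $f^{\#}_{4B_0,p,q}(x_j)\le(f^{\#}_{4B_0,p,q})^{\ast}(t)$, and since $B_j\subset 4B_0$ is admissible in the supremum defining $f^{\#}_{4B_0,p,q}(x_j)$, one gets $\mu(B_j)^{-q}\int_{B_j}|f-f_{B_j}|^p\le[(f^{\#}_{4B_0,p,q})^{\ast}(t)]^p$ \emph{uniformly} in $j$. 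Only then does the superadditivity bound $\sum_j\mu(B_j)^q\le(ct)^q$ finish the job. You should rework the covering step to include $F$.
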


\begin{proof}
Follows along the lines of the corresponding proof in \cite[Theorem V.7.3]{BS}.

It suffices to establish (\ref{eq:metric_bi_lhs}) for nonnegative functions.
Let $\lambda$ be as in Lemma \ref{le:covering} and fix $0<t<\frac{\lambda}%
{3}\mu(B_{0}).$ Let
\[
E=\{x\in B_{0}:f(x)>[f\chi_{B_{0}}]^{\ast}(t)\}
\]
and
\[
F=\{x\in B_{0}:f_{4B_{0},p,q}^{\#}(x)>[f_{4B_{0},p,q}^{\#}\chi_{B_{0}}]^{\ast
}(t)\}.
\]
There exists an open set $\Omega\supset E\cup F,$ with measure at most
$3t\leq\lambda\mu(B_{0})$. Consequently, we can apply Lemma \ref{le:covering}
to obtain a family of balls $\{B_{j}\}_{j},$ such that all the conditions this
Lemma are verified. Define disjoint sets by letting $M_{1}=B_{1}$ and
$M_{k}=B_{k}\setminus\bigcup_{i=1}^{k-1}B_{i}$ for $k=2,\dots$. We have
\begin{align*}
\int_{0}^{t}([f\chi_{B_{0}}]^{\ast}(s)-[f\chi_{B_{0}}]^{\ast}(t))^{p}ds  &
=\int_{E}\{f(x)-[f\chi_{B_{0}}]^{\ast}(t)\}^{p}d\mu(x)\\
&  =\sum_{j=1}^{\infty}\int_{E\cap M_{j}}\{f(x)-[f\chi_{B_{0}}]^{\ast
}(t)\}^{p}d\mu(x)\\
&  \leq c\sum_{j=1}^{\infty}\int_{B_{j}}|f-f_{B_{j}}|^{p}d\mu(x)+\\
&  c\sum_{j=1}^{\infty}\mu(E\cap M_{j})\{f_{B_{j}}-[f\chi_{B_{0}}]^{\ast
}(t)\}_{+}^{p}\\
&  =c(\alpha+\beta)\text{, say.}%
\end{align*}
Now,%
\begin{align*}
\beta &  \leq\sum_{\{j:f_{B_{j}}>[f\chi_{B_{0}}]^{\ast}(t)\}}\mu(E\cap
B_{j})\{f_{B_{j}}-[f\chi_{B_{0}}]^{\ast}(t)\}^{p}\\
&  \leq\sum_{\{j:f_{B_{j}}>[f\chi_{B_{0}}]^{\ast}(t)\}}\mu(B_{0}\cap
B_{j}\setminus E)\{f_{B_{j}}-[f\chi_{B_{0}}]^{\ast}(t)\}^{p}\\
&  \leq\sum_{\{j:f_{B_{j}}>[f\chi_{B_{0}}]^{\ast}(t)\}}\int_{B_{0}\cap
B_{j}\setminus E}\{f_{B_{j}}-f(x)\}^{p}d\mu(x)\\
&  \leq\sum_{j=1}^{\infty}\int_{B_{j}}|f-f_{B_{j}}|^{p}d\mu(x)=\alpha.
\end{align*}
Combining the previous estimates we obtain
\[
\int_{0}^{t}([f\chi_{B_{0}}]^{\ast}(s)-[f\chi_{B_{0}}]^{\ast}(t))^{p}%
ds\leq2c\alpha=2c\sum_{j}\mu(B_{j})^{q}\frac{1}{\mu(B_{j})^{q}}\int_{B_{j}%
}|f-f_{B_{j}}|^{p}d\mu(x).
\]
By (4) of Lemma \ref{le:covering}, the set $B_{0}\cap B_{j}\setminus F$ is
nonempty and, therefore, we can find a point $x_{j}\in B_{0}\cap
B_{j}\setminus F$. It follows that
\begin{align*}
\int_{0}^{t}([f\chi_{B_{0}}]^{\ast}(s)-[f\chi_{B_{0}}]^{\ast}(t))^{p}ds  &
\leq2c\sum_{j}\mu(B_{j})^{q}[f_{4B_{0},p,q}^{\#}\chi_{B_{0}}(x_{j})]^{p}\\
&  \leq ct^{q}[(f_{4B_{0},p,q}^{\#})^{\ast}(t)]^{p}.
\end{align*}
\end{proof}

\begin{corollary}
(cf. \cite[page 228]{SS}) Suppose that $\mu(X)=\infty,$ and let $c_{1}$ be the
constant of Theorem \ref{th:metric_bi_lhs}. Then
\begin{equation}
t^{-q/p}\left(  \int_{0}^{t}(f^{\ast}(s)-f^{\ast}(t))^{p}ds\right)  ^{1/p}\leq
c_{1}(f_{X,p,q}^{\#})^{\ast}(t), \label{eq:metric_bi_lhs_global}%
\end{equation}
for all $f\in L_{\text{loc}}^{1}(X),$ $t>0.$
\end{corollary}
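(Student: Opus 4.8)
The plan is to deduce the corollary from Theorem~\ref{th:metric_bi_lhs} by a scaling/exhaustion argument, using the hypothesis $\mu(X)=\infty$ to remove the restriction $0<t<c_2\mu(B_0)$ and to replace $B_0$-truncated rearrangements by global ones. First I would fix $f\in L^1_{\text{loc}}(X)$ and $t>0$. Since $\mu(X)=\infty$, I can choose a ball $B_0$ with $\mu(B_0)$ as large as I like; in particular, pick $B_0$ so that $\mu(B_0)>t/c_2$, which makes $t$ admissible in \eqref{eq:metric_bi_lhs}. Applying Theorem~\ref{th:metric_bi_lhs} on this ball gives
\[
t^{-q/p}\left(\int_0^t\bigl([f\chi_{B_0}]^{\ast}(s)-[f\chi_{B_0}]^{\ast}(t)\bigr)^p\,ds\right)^{1/p}\leq c_1\,(f_{4B_0,p,q}^{\#})^{\ast}(t),
\]
with $c_1$ independent of $B_0$. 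The right-hand side is dominated by $c_1(f_{X,p,q}^{\#})^{\ast}(t)$ because $f_{4B_0,p,q}^{\#}(x)\leq f_{X,p,q}^{\#}(x)$ pointwise (the supremum in the definition of $f_{X,p,q}^{\#}$ ranges over all balls, a larger family), and the nonincreasing rearrangement is monotone. So it remains to pass to the limit on the left-hand side as $B_0$ exhausts $X$.

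The key step is therefore to show that, along a sequence of balls $B_0^{(n)}$ with $\mu(B_0^{(n)})\to\infty$ (e.g.\ $B_0^{(n)}=B(x_0,n)$ for a fixed center $x_0$), one has, for each fixed $t$,
\[
\liminf_{n\to\infty}\int_0^t\bigl([f\chi_{B_0^{(n)}}]^{\ast}(s)-[f\chi_{B_0^{(n)}}]^{\ast}(t)\bigr)^p\,ds\;\geq\;\int_0^t\bigl(f^{\ast}(s)-f^{\ast}(t)\bigr)^p\,ds.
\]
For this I would use the standard fact that if $0\leq h_n\uparrow h$ pointwise $\mu$-a.e., then the distribution functions converge, $\lambda_{h_n}(\cdot)\uparrow\lambda_h(\cdot)$, hence $h_n^{\ast}(s)\uparrow h^{\ast}(s)$ for every $s>0$ at which $h^\ast$ is, say, continuous (and in fact for all $s$ by monotonicity arguments). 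Here one takes $h_n=|f|\chi_{B_0^{(n)}}$; after reducing to $f\geq 0$ (as in the theorem, by splitting into positive and negative parts, or simply noting the statement is about $|f|$ through $f^\ast$ and the sharp maximal operator controls $||f|-|f|_B|\leq|f-f_B|$... actually cleaner: prove it for $f\geq0$ and note $f^\ast=|f|^\ast$, $f^\#_{X,p,q}\le$ — wait, need care), we get $[f\chi_{B_0^{(n)}}]^{\ast}(s)\uparrow f^{\ast}(s)$ pointwise in $s$. Then the integrand $\bigl([f\chi_{B_0^{(n)}}]^{\ast}(s)-[f\chi_{B_0^{(n)}}]^{\ast}(t)\bigr)^p$ — note the sign: for $s<t$ the difference is nonnegative since $[f\chi_{B_0^{(n)}}]^\ast$ is nonincreasing — converges monotonically (increasingly) to $(f^{\ast}(s)-f^{\ast}(t))^p$, and the monotone convergence theorem yields the desired limit, with equality, not merely $\liminf$.

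The main obstacle I expect is the reduction to nonnegative $f$: for a signed function $f$, the truncation $f\chi_{B_0}$ does not increase monotonically to $f$ in absolute value in a way that directly controls $f^\ast$, and one must be a little careful that the inequality \eqref{eq:metric_bi_lhs}, as stated, already applies to all $f\in L^1_{\text{loc}}$ (the theorem's proof ``suffices to establish for nonnegative functions'' via $|f-f_B|$ estimates, and the left-hand side only sees $f^\ast$). Given that, the cleanest route is: apply Theorem~\ref{th:metric_bi_lhs} directly to $f$ itself on each $B_0^{(n)}$ (no reduction needed, since the theorem is stated for general $f$), bound the right side by $c_1(f_{X,p,q}^{\#})^\ast(t)$ as above, and for the left side invoke that $|f|\chi_{B_0^{(n)}}\uparrow|f|$ pointwise, hence $[f\chi_{B_0^{(n)}}]^\ast = [|f|\chi_{B_0^{(n)}}]^\ast \uparrow |f|^\ast = f^\ast$ pointwise on $(0,\infty)$, and conclude by monotone convergence inside the $s$-integral. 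A minor secondary point is justifying $\mu(B(x_0,n))\to\infty$: this is where $\mu(X)=\infty$ is used, together with $\mu$ being a Borel measure finite on balls, so $X=\bigcup_n B(x_0,n)$ forces $\mu(B(x_0,n))\to\mu(X)=\infty$. Putting these together gives \eqref{eq:metric_bi_lhs_global} for every $t>0$.
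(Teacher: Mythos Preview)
Your overall strategy---exhaust $X$ by balls $B_n=B(x_0,n)$, apply Theorem~\ref{th:metric_bi_lhs} on each $B_n$, dominate the right-hand side by $c_1(f_{X,p,q}^{\#})^{\ast}(t)$ via the pointwise inequality $f_{4B_n,p,q}^{\#}\le f_{X,p,q}^{\#}$, and pass to the limit on the left---is exactly the paper's approach. The gap is in your limiting step. You claim that the integrand
\[
\bigl([f\chi_{B_n}]^{\ast}(s)-[f\chi_{B_n}]^{\ast}(t)\bigr)^{p}
\]
converges \emph{monotonically increasingly} in $n$. This is false in general: both $[f\chi_{B_n}]^{\ast}(s)$ and $[f\chi_{B_n}]^{\ast}(t)$ increase with $n$, but their difference need not. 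For a concrete example on $\mathbb{R}$ with Lebesgue measure, take $f=\chi_{(-1,1)}+\chi_{(10,12)}$ and $B_n=(-n,n)$: at $s=1$, $t=3$ the difference is $1$ for $n=1$ but $0$ for $n\ge 12$. So monotone convergence does not apply as you state it.

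The paper avoids this by an extra step: since $[f\chi_{B_n}]^{\ast}(t)\le f^{\ast}(t)$, one has
\[
\bigl([f\chi_{B_n}]^{\ast}(s)-f^{\ast}(t)\bigr)_{+}\le [f\chi_{B_n}]^{\ast}(s)-[f\chi_{B_n}]^{\ast}(t),
\]
so replacing the subtrahend by the fixed quantity $f^{\ast}(t)$ (and taking the positive part) gives a \emph{smaller} integrand that still bounds from below what you want in the limit, and now only one term varies with $n$, monotonically. Fatou's lemma then yields the result. Alternatively, your integrand does converge pointwise (both terms converge), so a direct appeal to Fatou---rather than monotone convergence---on your original integrand would also suffice; but the justification you wrote (``monotonically (increasingly) \ldots with equality'') is incorrect as stated.
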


\begin{proof}
Let $t>0,$ and let $c_{2}$ be as in Theorem \ref{th:metric_bi_lhs}. Fix an
arbitrary $x_{0}\in X,$ since $\mu(X)=\infty,$ we can find a positive integer
$n_{0}$ such that, for $n\geq n_{0},$ and $B_{n}:=B(x_{0},n),$ we have
$t<c_{2}\mu(B_{n})$. Therefore, by (\ref{eq:metric_bi_lhs}),%
\[
t^{-q/p}\left(  \int_{0}^{t}([f\chi_{B_{n}}]^{\ast}(s)-[f\chi_{B_{n}}]^{\ast
}(t))^{p}ds\right)  ^{1/p}\leq c_{1}(f_{4B_{n},p,q}^{\#})^{\ast}(t),
\]
and consequently
\begin{equation}
t^{-q/p}\left(  \int_{0}^{t}([f\chi_{B_{n}}]^{\ast}(s)-f^{\ast}(t))_{+}%
^{p}ds\right)  ^{1/p}\leq c_{1}(f_{X,p,q}^{\#})^{\ast}(t).
\end{equation}
Letting $n\rightarrow\infty,$ and using Fatou's lemma, we see that
\[
t^{-q/p}\left(  \int_{0}^{t}(f^{\ast}(s)-f^{\ast}(t))^{p}ds\right)  ^{1/p}\leq
c_{1}(f_{X,p,q}^{\#})^{\ast}(t).
\]
\end{proof}

Once the inequality (\ref{eq:metric_bi_lhs}) is available then it can be
combined with the Poincar\'{e} inequality, as described in the introduction,
to obtain the symmetrization inequality.

\begin{theorem}
\label{th:metric_bi} Let $B_{0}\subset X$ be a ball, and suppose that $f$ and
$g$ satisfy a $p-q-$Poincar\'{e} inequality on $4\sigma B_{0}$ (with constant
$c_{P}$). Then there exist positive constants $c_{1}=c_{1}(B_{0},c_{P})$ and
$1\geq c_{2}=c_{2}(X),$ such that, for $0<t<c_{2}\mu(B_{0})$,%
\begin{equation}
t^{-\frac{1}{s}}\left(  \frac{1}{t}\int_{0}^{t}([f\chi_{B_{0}}]^{\ast
}(s)-[f\chi_{B_{0}}]^{\ast}(t))^{p}ds\right)  ^{1/p}\leq c_{1}[(g^{q}%
)^{\ast\ast}(t)]^{1/q}. \label{eq:metric_bi}%
\end{equation}
\end{theorem}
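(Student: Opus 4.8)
The plan is to chain together the two ingredients that the introduction advertises: the abstract rearrangement estimate of Theorem~\ref{th:metric_bi_lhs}, which bounds the left-hand side of \eqref{eq:metric_bi} by the decreasing rearrangement of the sharp maximal function $f_{4B_0,p,q}^{\#}$, and the $p$--$q$--Poincar\'{e} inequality \eqref{pqP}, which bounds that sharp maximal function pointwise by a Hardy--Littlewood-type maximal operator applied to $g^q$. First I would apply Theorem~\ref{th:metric_bi_lhs} with the ball $B_0$: for $0<t<c_2\mu(B_0)$,
\[
t^{-q/p}\left(\int_0^t([f\chi_{B_0}]^{\ast}(s)-[f\chi_{B_0}]^{\ast}(t))^p\,ds\right)^{1/p}\leq c_1\,(f_{4B_0,p,q}^{\#})^{\ast}(t).
\]
Note that the exponent $q/p$ on the left is exactly what is needed: once we show $(f_{4B_0,p,q}^{\#})^{\ast}(t)\preceq [(g^q)^{\ast\ast}(t)]^{1/q}$, multiplying through and taking into account $t^{-q/p}=t^{-1/p}\cdot t^{1/p-q/p}$ will reorganize into the $t^{-1/s}$ and $\tfrac1t\int_0^t$ of \eqref{eq:metric_bi}, using the relation between the Poincar\'{e} scaling $r(B)$ and the measure growth \eqref{eq:meas_growth}.

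The second, and genuinely substantive, step is the pointwise bound on the sharp maximal function. Fix $x\in B_0$ and a ball $B$ with $x\in B\subset 4B_0$. Assuming (as in the statement) that the Poincar\'{e} inequality holds on $4\sigma B_0$, for every such $B$ we have $\sigma B\subset 4\sigma B_0$, so \eqref{pqP} applies and gives
\[
\left(\frac{1}{\mu(B)}\int_B|f-f_B|^p\,d\mu\right)^{1/p}\leq c_P\,r(B)\left(\frac{1}{\mu(\sigma B)}\int_{\sigma B}g^q\,d\mu\right)^{1/q}.
\]
Multiplying both sides by $\mu(B)^{1/p-q}$ (to match the definition of $f_{4B_0,p,q}^{\#}$, which carries $\mu(B)^{-q}$ inside the $L^p$ average to the power $1/p$, i.e. an overall factor $\mu(B)^{-q}\cdot\mu(B)=\mu(B)^{1-q}$ relative to the average), and using $r(B)\preceq (\mu(B)/\mu(4B_0))^{1/s}$ together with the doubling comparison $\mu(\sigma B)\approx\mu(B)$ (the implied constant depending only on $\sigma$ and $c_d$), one sees
\[
\left(\frac{1}{\mu(B)^{q}}\int_B|f-f_B|^p\,d\mu\right)^{1/p}\preceq c(B_0,c_P)\left(\frac{1}{\mu(B)}\int_{\sigma B}g^q\,d\mu\right)^{1/q}.
\]
Enlarging the domain of integration from $\sigma B$ to $4\sigma B_0$ is not what we want here since it destroys the maximal structure; instead, replacing $\mu(B)$ by $\mu(\sigma B)$ in the denominator up to a doubling constant, the right side is dominated by $[M_{4\sigma B_0}(g^q)(x)]^{1/q}$, where $M_{4\sigma B_0}$ is the (restricted, non-centered) Hardy--Littlewood maximal operator over balls contained in $4\sigma B_0$. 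Taking the supremum over all admissible $B\ni x$ yields the pointwise estimate
\[
f_{4B_0,p,q}^{\#}(x)\preceq c(B_0,c_P)\,[M_{4\sigma B_0}(g^q)(x)]^{1/q},\qquad x\in B_0.
\]

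The final step is to pass to rearrangements. Taking decreasing rearrangements (which preserve pointwise $\preceq$) and using the fact that the restricted Hardy--Littlewood maximal operator is of weak type $(1,1)$ and strong type $(\infty,\infty)$ with constants controlled by the doubling order — exactly the classical estimate that underlies \eqref{ser4}, and which holds in any homogeneous space by the covering Lemma~\ref{le:covering} — gives $(M_{4\sigma B_0}(g^q))^{\ast}(t)\preceq (g^q)^{\ast\ast}(t)$ for $t>0$ (rearrangements taken with respect to the ambient measure, using $g\chi_{4\sigma B_0}$ or $g$ as appropriate; since we only need an upper bound, extending $g^q$ by zero off $4\sigma B_0$ costs nothing). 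Hence $(f_{4B_0,p,q}^{\#})^{\ast}(t)\preceq c(B_0,c_P)[(g^q)^{\ast\ast}(t)]^{1/q}$. Substituting into the displayed consequence of Theorem~\ref{th:metric_bi_lhs} and rearranging the powers of $t$ (here is where $1/s$ enters, via $r(B)^s\approx \mu(B)/\mu(4B_0)$ and the normalization of the average) produces \eqref{eq:metric_bi} with $c_1$ depending on $B_0$ and $c_P$ and $c_2$ depending only on $X$ (it is the $c_2$ of Theorem~\ref{th:metric_bi_lhs}, adjusted by the fixed ratio coming from $\mu(4\sigma B_0)/\mu(B_0)$).

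I expect the main obstacle to be bookkeeping the exponents and the various fixed geometric constants cleanly: one must track how the factor $\mu(B)^{-q}$ in the definition of $f_{4B_0,p,q}^{\#}$, the radius factor $r(B)$ from Poincar\'{e}, and the measure-growth lower bound \eqref{eq:meas_growth} combine so that the $t^{-q/p}$ on the left of \eqref{eq:metric_bi_lhs} turns into precisely $t^{-1/s}(\tfrac1t\int_0^t\cdots)^{1/p}$, and to make sure every constant that depends on $B_0$ (through $\mu(4B_0)$, $\mu(4\sigma B_0)$, $\sigma$, $c_d$) is absorbed into $c_1(B_0,c_P)$ while $c_2$ remains a dimensional constant. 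The analytic content — Theorem~\ref{th:metric_bi_lhs}, the Poincar\'{e} inequality, and the weak-$(1,1)$ bound for the maximal operator — is all in hand; the work is in the algebra of rearrangements and the careful use of doubling to pass between $\mu(B)$, $\mu(\sigma B)$, and $r(B)^s$.
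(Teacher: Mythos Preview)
Your overall strategy matches the paper's exactly: combine Theorem~\ref{th:metric_bi_lhs} with the Poincar\'{e} inequality rewritten as a pointwise bound $f^{\#}\preceq (Mg^q)^{1/q}$, then pass to rearrangements using $(Mh)^{\ast}\preceq h^{\ast\ast}$. However, there is a genuine gap: you have conflated two different parameters that happen to share the letter $q$. The exponent in the sharp maximal function $f_{4B_0,p,\tilde q}^{\#}$ is a \emph{free} parameter in Theorem~\ref{th:metric_bi_lhs}, and it is \emph{not} the Poincar\'{e} exponent $q$; the correct choice is $\tilde q=1+p/s$, as the paper makes explicit.

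Concretely, your displayed pointwise inequality
\[
\left(\frac{1}{\mu(B)^{q}}\int_B|f-f_B|^p\,d\mu\right)^{1/p}\preceq c(B_0,c_P)\left(\frac{1}{\mu(\sigma B)}\int_{\sigma B}g^q\,d\mu\right)^{1/q}
\]
does not follow from Poincar\'{e} plus \eqref{eq:meas_growth}. Multiplying the Poincar\'{e} inequality by $\mu(B)^{(1-\tilde q)/p}$ and using $r(B)\preceq\mu(B)^{1/s}$ leaves a factor $\mu(B)^{(1-\tilde q)/p+1/s}$ on the right, which is bounded uniformly in $B$ only when $(1-\tilde q)/p+1/s\ge 0$, and is actually constant (so that no residual power of $\mu(B)$ survives to be ``reorganized'' later) only when $\tilde q=1+p/s$. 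With that choice, Theorem~\ref{th:metric_bi_lhs} yields $t^{-(1+p/s)/p}(\int_0^t\cdots)^{1/p}=t^{-1/s}(\tfrac1t\int_0^t\cdots)^{1/p}$ on the left automatically, and the pointwise bound becomes $f_{4B_0,p,1+p/s}^{\#}(x)\le c\,(Mg^q(x))^{1/q}$, exactly as in the paper. Your remark that the $t$-exponent ``will reorganize into $t^{-1/s}$\ldots using the relation between $r(B)$ and the measure growth'' is where the confusion shows: you have already spent \eqref{eq:meas_growth} to obtain the pointwise bound, so there is nothing left to adjust the $t$-power with afterward. Once you set $\tilde q=1+p/s$ from the outset, both issues resolve simultaneously and the rest of your argument goes through verbatim.
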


\begin{proof}
From the underlying Poincar\'{e} inequality and (\ref{eq:meas_growth}) (with
$\widetilde{B}=4B_{0})$ we get
\[
\left(  \frac{1}{(\mu(B))^{1+p/s}}\int_{B}|f-f_{B}|^{p}d\mu\right)  ^{1/p}\leq
c\left(  \frac{1}{\mu(\sigma B)}\int_{\sigma B}g^{q}d\mu\right)  ^{1/q},
\]
for every ball $B$ with $B\subset4B_{0}$.

Fix an arbitrary point $x\in B_{0}$. Taking a supremum over all balls
containing $x$ on the right hand side, and over all balls $B\subset4B_{0}$
containing $x$ on the left hand side, we arrive at
\[
f_{4B_{0},p,1+p/s}^{\#}(x)\leq c\left(  Mg^{q}(x)\right)  ^{1/q},
\]
where $M$ is the maximal operator of Hardy-Littlewood. After passing to
rearrangements, and using (recall that the underlying measure is doubling)
\[
(Mh)^{\ast}(t)\leq ch^{\ast\ast}(t),
\]
combined with Theorem \ref{th:metric_bi_lhs}, we obtain positive constants
$c_{1}$, $c_{2}$ such that
\[
t^{-\frac{1}{s}}\left(  \frac{1}{t}\int_{0}^{t}([f\chi_{B_{0}}]^{\ast
}(s)-[f\chi_{B_{0}}]^{\ast}(t))^{p}ds\right)  ^{1/p}\leq c_{1}[(g^{q}%
)^{\ast\ast}(t)]^{1/q},0<t<c_{2}\mu(B_{0}).
\]
\end{proof}

\begin{remark}
\label{re:counterexample_bi} It may not be possible to extend the inequality
(\ref{eq:metric_bi}) to all $0<t<\mu(B_{0}).$ This can be seen from the
following counterexample for $p=q=1$.

Let $f_{k}(x)=1$ if $x\in\lbrack-1,1]^{2}\setminus B_{1/k}(0),$ $f_{k}%
(x)=k|x|$ for $x\in B_{1/k}(0)$. Now, $|\nabla f_{k}|^{\ast}(t)=k\chi
_{(0,\frac{\pi}{k^{2}}]}(t)$ and $|\nabla f_{k}|^{\ast\ast}(t)=k\chi
_{(0,\frac{\pi}{k^{2}}]}(t)+\frac{\pi}{k}\frac{1}{t}\chi_{\lbrack\frac{\pi
}{k^{2}},\infty)}(t)$. If (\ref{eq:metric_bi}) were true for $0<t<4$, then
taking the limit as $t\rightarrow4$ of (\ref{eq:metric_bi}) would give us%
\[
\Vert f_{k}\Vert_{L^{1}(Q)}\leq c\frac{\pi}{4}\frac{1}{k}.
\]
But whereas the right-hand side $\rightarrow0$ as $k\rightarrow\infty$ the
left-hand side $\approx4$ . One possible way to overcome this problem is to
consider functions with zero average, by means of replacing $f$ by $f-f_{Q}$
(cf. \cite{JMMM}).
\end{remark}

\begin{remark}
\label{re:global_bi} Suppose that the following global growth condition holds
for every ball $B\subset X,$%
\begin{equation}
\mu(B)\geq cr(B)^{s}. \label{eq:global_growth}%
\end{equation}
Then using the proof of Theorem \ref{th:metric_bi} together with
(\ref{eq:metric_bi_lhs_global}) yields
\[
t^{-\frac{1}{s}}\left(  \frac{1}{t}\int_{0}^{t}(f^{\ast}(s)-f^{\ast}%
(t))^{p}ds\right)  ^{1/p}\leq c_{1}[(g^{q})^{\ast\ast}(t)]^{1/q},t>0.
\]
\end{remark}

\section{Applications}

First we consider the Sobolev embedding theorem for metric spaces.

\begin{theorem}
\label{th:metric_embedding} Let $B_{0}\subset X$ be a ball and let $Y=Y(X)$ be
an r.i. space. Suppose that the operator $P_{\max\{p,q\}}$ is bounded on $Y,$
and let $f$ and $g$ satisfy a $p-q-$Poincar\'{e} inequality on $4\sigma B_{0}%
$. Then there exists a constant $c>0,$ independent of $f$ and $g$, such that
\[
\Vert f\chi_{B_{0}}\Vert_{Y^{p}(\infty,s)}\leq c(\Vert g\Vert_{Y}+\Vert
f\chi_{B_{0}}\Vert_{Y}).
\]
\end{theorem}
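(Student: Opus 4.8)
The plan is to bound directly the $Y$-norm of the function
\[
h(t):=t^{-1/s}\left(\frac{1}{t}\int_{0}^{t}\bigl([f\chi_{B_{0}}]^{\ast}(s)-[f\chi_{B_{0}}]^{\ast}(t)\bigr)^{p}\,ds\right)^{1/p},
\]
since by definition $\Vert f\chi_{B_{0}}\Vert_{Y^{p}(\infty,s)}=\Vert h\Vert_{Y}$ (with $\Vert\cdot\Vert_{Y}$ the representation-space norm on $(0,\infty)$). Theorem \ref{th:metric_bi} controls $h(t)$ only on the range $0<t<c_{2}\mu(B_{0})$, so I would split $h=h\chi_{(0,c_{2}\mu(B_{0}))}+h\chi_{[c_{2}\mu(B_{0}),\infty)}$, estimate each (nonnegative) summand by a quantity to which the boundedness of $P_{\max\{p,q\}}$ applies, and recombine at the end using the triangle inequality of the Banach function norm of $Y$.

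For the main range, Theorem \ref{th:metric_bi} gives $h(t)\le c_{1}[(g^{q})^{\ast\ast}(t)]^{1/q}$ on $(0,c_{2}\mu(B_{0}))$. Since $(g^{q})^{\ast}=(g^{\ast})^{q}$, one has $[(g^{q})^{\ast\ast}(t)]^{1/q}=\left(\frac{1}{t}\int_{0}^{t}(g^{\ast}(s))^{q}\,ds\right)^{1/q}=(P_{q}g)(t)$, and because $q\le\max\{p,q\}$ the power-mean inequality on the interval $(0,t)$ with normalized measure $ds/t$ yields $(P_{q}g)(t)\le(P_{\max\{p,q\}}g)(t)$ pointwise. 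Using the lattice property of $Y$ together with the hypothesis that $P_{\max\{p,q\}}$ is bounded on $Y$ (say with norm $C$), this gives $\Vert h\chi_{(0,c_{2}\mu(B_{0}))}\Vert_{Y}\le c_{1}C\Vert g\Vert_{Y}$.

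For the tail one exploits that $\supp(f\chi_{B_{0}})\subset B_{0}$, so $[f\chi_{B_{0}}]^{\ast}(t)=0$ once $t\ge\mu(B_{0})$; in any case, for $s\le t$ one has $0\le[f\chi_{B_{0}}]^{\ast}(s)-[f\chi_{B_{0}}]^{\ast}(t)\le[f\chi_{B_{0}}]^{\ast}(s)$, whence $h(t)\le t^{-1/s}(P_{p}(f\chi_{B_{0}}))(t)$ for every $t>0$. On $t\ge c_{2}\mu(B_{0})$ the decreasing weight $t^{-1/s}$ is bounded by $(c_{2}\mu(B_{0}))^{-1/s}<\infty$ (recall $0<\mu(B_{0})<\infty$ for a ball in a homogeneous space), and, since $p\le\max\{p,q\}$, the power-mean inequality again gives $P_{p}(f\chi_{B_{0}})\le P_{\max\{p,q\}}(f\chi_{B_{0}})$ pointwise; hence $\Vert h\chi_{[c_{2}\mu(B_{0}),\infty)}\Vert_{Y}\le(c_{2}\mu(B_{0}))^{-1/s}C\Vert f\chi_{B_{0}}\Vert_{Y}$. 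Adding the two bounds produces the asserted inequality with $c=C\max\{c_{1},(c_{2}\mu(B_{0}))^{-1/s}\}$. The only genuinely delicate point is this tail range: Theorem \ref{th:metric_bi} says nothing there, so one must combine the compact support of $f\chi_{B_{0}}$ with the fact that the weight $t^{-1/s}$ stays bounded away from $t=0$ — which is exactly why the extra term $\Vert f\chi_{B_{0}}\Vert_{Y}$ must appear on the right-hand side.
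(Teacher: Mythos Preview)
Your argument is correct and follows essentially the same route as the paper's own proof: split into the range $0<t<c_{2}\mu(B_{0})$ handled by Theorem~\ref{th:metric_bi} and the tail $t\ge c_{2}\mu(B_{0})$ where $t^{-1/s}$ is bounded, then invoke the boundedness of $P_{\max\{p,q\}}$. The only cosmetic difference is that the paper combines both ranges into a single pointwise inequality $h(t)\le c\bigl[P_{q}(g)(t)+P_{p}(f\chi_{B_{0}})(t)\bigr]$ valid for all $t>0$ before applying the $Y$-norm once, whereas you apply the norm to each piece and add; you are also more explicit than the paper about the power-mean step $P_{p},P_{q}\le P_{\max\{p,q\}}$, which the paper leaves implicit in the phrase ``in view of our assumption on $P_{\max\{p,q\}}$.''
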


\begin{proof}
By Theorem \ref{th:metric_bi} there are constants $c_{1},c_{2}$ such that, for
$0<t<c_{2}\mu(B_{0}),$%
\[
t^{-\frac{1}{s}}\left(  \frac{1}{t}\int_{0}^{t}([f\chi_{B_{0}}]^{\ast
}(s)-[f\chi_{B_{0}}]^{\ast}(t))^{p}ds\right)  ^{1/p}\leq c_{1}[(g^{q}%
)^{\ast\ast}(t)]^{1/q}.
\]
If $t\geq c_{2}\mu(B_{0})$, we have
\[
t^{-\frac{1}{s}}\left(  \frac{1}{t}\int_{0}^{t}([f\chi_{B_{0}}]^{\ast
}(s)-[f\chi_{B_{0}}]^{\ast}(t))^{p}ds\right)  ^{1/p}\leq(c_{2}\mu
(B_{0}))^{-\frac{1}{s}}\left(  \frac{1}{t}\int_{0}^{t}([f\chi_{B_{0}}]^{\ast
}(s))^{p}ds\right)  ^{1/p}.
\]
Therefore,%
\[
t^{-\frac{1}{s}}\left(  \frac{1}{t}\int_{0}^{t}([f\chi_{B_{0}}]^{\ast
}(s)-[f\chi_{B_{0}}]^{\ast}(t))^{p}ds\right)  ^{1/p}\leq c\left[
P_{q}(g)(t)+P_{p}(f\chi_{B_{0}})(t)\right]  ,
\]
for all $t>0$. In view of our assumption on $P_{\max\{p,q\}}$ it follows, upon
applying the $Y$ norm to both sides of the previous inequality, that
\[
\Vert t^{-\frac{1}{s}}\left(  \frac{1}{t}\int_{0}^{t}([f{\chi_{B_{0}}}]^{\ast
}(s)-[f{\chi_{B_{0}}}]^{\ast}(t))^{p}ds\right)  ^{1/p}\Vert_{Y}\leq c\left[
\Vert g\Vert_{Y}+\Vert f\chi_{B_{0}}\Vert_{Y}\right]  ,
\]
as we wished to show.
\end{proof}

\begin{remark}
Arguing as in Remark \ref{re:global_bi} we conclude that, if the global growth
condition (\ref{eq:global_growth}) holds for all balls $B\subset X,$ then, for
all $f$ and $g$ as in Theorem \ref{th:metric_embedding}, we have
\[
\Vert f\Vert_{Y^{p}(\infty,s)}\leq c\Vert g\Vert_{Y}.
\]
\end{remark}

We now apply our symmetrization inequality to derive functional forms of
Faber-Krahn inequalities (see \cite{Ba} for a brief introduction to
inequalities of this type). In the following we denote
\[
\|f\|_{0}:=\mu(\text{supp}(f)).
\]
We will also assume that $\mu$ is nonatomic.

\begin{theorem}
\label{th:metric_faber} Let $B_{0}\subset X$ be a ball and let $f$ be a
function with $\text{supp}(f)\subset B_{0}$. Let $Z(X)$ be an r.i. space and
let $\phi_{Z^{\prime}}$ denote the fundamental function of its associate space
$Z^{\prime}(X).$

(i) Let $f$ be a $p-q-$Sobolev function and let $g$ be a gradient of $f$. If
$\Vert f\Vert_{0}<c_{2}\mu(B_{0})$, where $c_{2}$ is the constant of Theorem
\ref{th:metric_bi}, then
\begin{equation}
\Vert f\Vert_{L^{p}(X)}\leq c\left[  \Vert g^{q}\Vert_{Z(X)}\phi_{Z^{\prime}%
}(\Vert f\Vert_{0})\right]  ^{1/q}\Vert f\Vert_{0}^{\frac{s+p}{sp}-\frac{1}%
{q}}.
\end{equation}

(ii) Let $f$ be a $1-q-$Sobolev function, $q>s,$ and let $g$ be a gradient of
$f$. If $\Vert f\Vert_{0}<c_{2}\mu(B_{0})$, where $c_{2}$ is the constant of
Theorem \ref{th:metric_bi}, then
\begin{equation}
\Vert f\Vert_{L^{\infty}(X)}\leq c\left[  \Vert g^{q}\Vert_{Z(X)}%
\phi_{Z^{\prime}}(\Vert f\Vert_{0})\right]  ^{1/q}\Vert f\Vert_{0}^{1/s-1/q}.
\end{equation}
\end{theorem}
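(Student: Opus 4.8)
The plan is to feed the symmetrization inequality of Theorem~\ref{th:metric_bi} into the standard argument sketched for the Euclidean case in the introduction, keeping track of the r.i. space $Z$ via the Hölder/associate-space duality. First I would record the key identity
\[
\int_{f^{\ast}(t)}^{\infty}\lambda_{f}(u)\,du = t\big(f^{\ast\ast}(t)-f^{\ast}(t)\big),
\]
and note that if $\mathrm{supp}(f)\subset B_{0}$ then choosing $t=\Vert f\Vert_{0}$ gives $f^{\ast}(t)=0$, so the left-hand side equals $\Vert f\Vert_{1}$ and the right-hand side is $\int_{0}^{t}f^{\ast}(s)\,ds$. Since $\Vert f\Vert_{0}<c_{2}\mu(B_{0})$, Theorem~\ref{th:metric_bi} applies at this value of $t$, and after discarding the subtracted term $[f\chi_{B_0}]^{\ast}(t)=f^{\ast}(\Vert f\Vert_0)=0$ it reads
\[
t^{-1/s}\Big(\frac{1}{t}\int_{0}^{t}f^{\ast}(s)^{p}\,ds\Big)^{1/p}\le c_{1}\big[(g^{q})^{\ast\ast}(t)\big]^{1/q},\qquad t=\Vert f\Vert_{0}.
\]

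For part (i), I would bound $\Vert f\Vert_{L^p}^{p}=\int_{0}^{t}f^{\ast}(s)^{p}\,ds$ directly by rearranging the displayed inequality: $\int_{0}^{t}f^{\ast}(s)^{p}\,ds\le c_{1}^{p}\,t^{\,1+p/s}\,[(g^{q})^{\ast\ast}(t)]^{p/q}$, so
\[
\Vert f\Vert_{L^p}\le c\,t^{\frac{1}{p}+\frac{1}{s}}\,[(g^{q})^{\ast\ast}(t)]^{1/q}
= c\,\Vert f\Vert_{0}^{\frac{s+p}{sp}}\,[(g^{q})^{\ast\ast}(\Vert f\Vert_0)]^{1/q}.
\]
It remains to estimate $(g^{q})^{\ast\ast}(\Vert f\Vert_0)$. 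Here I would use the fundamental duality bound for r.i.\ spaces, $h^{\ast\ast}(t)=\frac{1}{t}\int_0^t h^{\ast}\le \frac{1}{t}\Vert h\Vert_{Z}\,\phi_{Z'}(t)$ (this is just Hölder's inequality on $(0,\infty)$ together with $\Vert\chi_{(0,t)}\Vert_{Z'}=\phi_{Z'}(t)$), applied to $h=g^{q}$. This gives $(g^{q})^{\ast\ast}(\Vert f\Vert_0)\le \Vert f\Vert_0^{-1}\Vert g^{q}\Vert_{Z}\,\phi_{Z'}(\Vert f\Vert_0)$, and substituting yields exactly
\[
\Vert f\Vert_{L^p}\le c\,\big[\Vert g^{q}\Vert_{Z}\,\phi_{Z'}(\Vert f\Vert_0)\big]^{1/q}\,\Vert f\Vert_0^{\frac{s+p}{sp}-\frac{1}{q}},
\]
as claimed.

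For part (ii), with $p=1$ and $q>s$, the same chain gives, for every $0<t<c_2\mu(B_0)$,
\[
\frac{1}{t}\int_0^t f^{\ast}(s)\,ds\le c_1\,t^{1/s}\,[(g^q)^{\ast\ast}(t)]^{1/q}\le c_1\,t^{1/s-1/q}\,\Vert g^q\Vert_Z^{1/q}\,\phi_{Z'}(t)^{1/q},
\]
using the duality bound as before; note $(g^q)^{\ast\ast}$ is taken over all of $X$, but since $\mathrm{supp}(f)\subset B_0$ one may as well take $t\le\Vert f\Vert_0$. Because $q>s$, the exponent $1/s-1/q$ is positive, so the right-hand side is a nondecreasing-in-$t$ majorant (up to the slowly varying $\phi_{Z'}$), and I would let $t\to\Vert f\Vert_0^{-}$ — at which point $\frac{1}{t}\int_0^t f^{\ast}\to f^{\ast\ast}(\Vert f\Vert_0)\ge f^{\ast}(0^{+})=\Vert f\Vert_{L^\infty}$ — to conclude
\[
\Vert f\Vert_{L^\infty}\le c\,\big[\Vert g^q\Vert_Z\,\phi_{Z'}(\Vert f\Vert_0)\big]^{1/q}\,\Vert f\Vert_0^{1/s-1/q}.
\]
(One should be slightly careful: $f^{\ast\ast}\ge f^{\ast}$ pointwise but $\lim_{t\to0}f^{\ast\ast}(t)=\Vert f\Vert_\infty$, whereas here $t=\Vert f\Vert_0$ need not be small; the cleaner route is to apply the displayed bound at each $t\le\Vert f\Vert_0$ and use the elementary fact that $\sup_{0<t\le a} f^{\ast\ast}(t)=f^{\ast\ast}(0^+)=\Vert f\Vert_\infty$ together with monotonicity of the majorant, so that the supremum is attained in the limit $t\to 0$ — this forces me instead to observe that $f^{\ast}(t)\le f^{\ast\ast}(t)$ and that the majorant $t^{1/s-1/q}\phi_{Z'}(t)^{1/q}$ tends to $0$ as $t\to0$, which would give $\Vert f\Vert_\infty=0$ — so the correct pairing is to bound $\Vert f\Vert_\infty = f^{\ast}(0^+)$ only after first establishing an $L^\infty$ bound from the $L^p$-type estimates via a limiting/iteration argument, as in \cite{MP}.)

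The main obstacle, as the parenthetical remark signals, is part (ii): extracting an honest $L^{\infty}$ bound. The inequality from Theorem~\ref{th:metric_bi} controls the averaged quantity $\frac1t\int_0^t f^{\ast p}$, not $f^{\ast}(t)$ itself, and at small $t$ the right-hand majorant vanishes, so one cannot simply send $t\to0$. The resolution is the standard one for borderline Sobolev embeddings: when $q>s$ the space $Y^{1}(\infty,s)$ with $Y=Z^{(q)}$ (the $q$-th power space of $Z$) embeds into $L^{\infty}$ because the relevant Hardy-type operator maps into bounded functions; concretely, one shows $f^{\ast}(t)\le f^{\ast\ast}(t)\le \int_t^{\infty}\big(f^{\ast\ast}(u)-f^{\ast}(u)\big)\frac{du}{u}+f^{\ast\ast}(\Vert f\Vert_0)$ — or uses the representation $f^{\ast}(t)=\int_t^\infty(-df^{\ast})$ — and then estimates each $f^{\ast\ast}(u)-f^{\ast}(u)$, $u\le\Vert f\Vert_0$, via \eqref{eq:metric_bi} and the duality bound, the resulting integral $\int_0^{\Vert f\Vert_0} u^{1/s-1/q-1}\phi_{Z'}(u)^{1/q}\,du$ converging precisely because $q>s$. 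Carrying that estimate out carefully, with the constant tracked through $\phi_{Z'}$, is the only non-routine step; part (i) is essentially a one-line substitution once the identity for $\int_{f^\ast(t)}^\infty\lambda_f$ and the Hölder bound $h^{\ast\ast}(t)\le t^{-1}\Vert h\Vert_Z\phi_{Z'}(t)$ are in hand.
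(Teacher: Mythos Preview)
Your argument for part (i) is correct and coincides with the paper's: substitute $t=\Vert f\Vert_0$ in \eqref{eq:metric_bi}, use $f^{\ast}(\Vert f\Vert_0)=0$, and estimate $(g^{q})^{\ast\ast}(t)\le t^{-1}\Vert g^{q}\Vert_{Z}\phi_{Z'}(t)$ by H\"older duality.

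For part (ii), however, your proposal is not a proof but a sequence of false starts that only at the end gestures toward the correct mechanism. The claim $f^{\ast\ast}(\Vert f\Vert_0)\ge f^{\ast}(0^{+})=\Vert f\Vert_{\infty}$ is simply wrong (the average is at most the sup), and you correctly retract it; the attempt to send $t\to 0$ also fails, as you note, because the majorant vanishes there. What actually works---and what the paper does---is the identity
\[
-\frac{d}{dt}f^{\ast\ast}(t)=\frac{f^{\ast\ast}(t)-f^{\ast}(t)}{t},
\]
which after integrating over $(0,\Vert f\Vert_0)$ gives
\[
\Vert f\Vert_{L^\infty}-f^{\ast\ast}(\Vert f\Vert_0)=\int_0^{\Vert f\Vert_0}\frac{f^{\ast\ast}(u)-f^{\ast}(u)}{u}\,du
\le c_1\int_0^{\Vert f\Vert_0}u^{1/s-1}\big[(g^q)^{\ast\ast}(u)\big]^{1/q}\,du,
\]
the last step being \eqref{eq:metric_bi} with $p=1$. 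Bounding $\int_0^{u}(g^q)^{\ast}\le\Vert g^q\Vert_Z\,\phi_{Z'}(u)\le\Vert g^q\Vert_Z\,\phi_{Z'}(\Vert f\Vert_0)$ reduces the integral to $\int_0^{\Vert f\Vert_0}u^{1/s-1/q-1}\,du$, finite exactly when $q>s$. Your final paragraph essentially describes this integration, but your integral representation $f^{\ast\ast}(t)\le\int_t^\infty(f^{\ast\ast}-f^{\ast})\frac{du}{u}+f^{\ast\ast}(\Vert f\Vert_0)$ double-counts the tail (for $u>\Vert f\Vert_0$ one has $(f^{\ast\ast}(u)-f^{\ast}(u))/u=\Vert f\Vert_1/u^2$, whose integral already equals $f^{\ast\ast}(\Vert f\Vert_0)$).

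The genuine gap is the boundary term: you never say how to control $f^{\ast\ast}(\Vert f\Vert_0)=\Vert f\Vert_{L^1}/\Vert f\Vert_0$. The paper closes the loop by invoking part (i) with $p=1$, which gives precisely $\Vert f\Vert_{L^1}\le c\big[\Vert g^q\Vert_Z\phi_{Z'}(\Vert f\Vert_0)\big]^{1/q}\Vert f\Vert_0^{1+1/s-1/q}$, so that $f^{\ast\ast}(\Vert f\Vert_0)$ is bounded by the same right-hand side as the integral term. Without this step the argument is incomplete.
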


\begin{proof}
\textit{(i)} By Theorem \ref{th:metric_bi} we have, for $0<t<c_{2}\mu(B_{0})$,%
\begin{equation}
t^{-\frac{s+p}{sp}}\left(  \int_{0}^{t}[f^{\ast}(s)-f^{\ast}(t)]^{p}ds\right)
^{1/p}\leq c_{1}[(g^{q})^{\ast\ast}(t)]^{1/q}. \label{eq:proof_fk1}%
\end{equation}
Now, since $\Vert f\Vert_{0}<c_{2}\mu(B_{0}),$ using right-continuity of the
decreasing rearrangement we can substitute $t=\Vert f\Vert_{0}$ in
(\ref{eq:proof_fk1}). Thus,%
\[
\Vert f\Vert_{0}^{-\frac{s+p}{sp}}\Vert f\Vert_{L^{p}}\leq c_{1}\left[
\frac{1}{\Vert f\Vert_{0}}\int_{0}^{\Vert f\Vert_{0}}(g^{q})^{\ast
}(s)ds\right]  ^{1/q}.
\]
Applying H\"{o}lder's inequality we finally obtain%
\[
\Vert f\Vert_{L^{p}}\leq c_{1}\left[  \Vert g^{q}\Vert_{Z}\phi_{Z^{\prime}%
}(\Vert f\Vert_{0})\right]  ^{1/q}\Vert f\Vert_{0}^{\frac{s+p}{sp}-\frac{1}%
{q}}.
\]
\textit{(ii)} We first observe that $-\frac{d}{dt}f^{\ast\ast}(t)=[f^{\ast
\ast}(t)-f^{\ast}(t)]/t$. Thus, by Theorem \ref{th:metric_bi}, we have
\[
-\frac{d}{dt}f^{\ast\ast}(t)\leq c_{1}t^{1/s-1}\left(  \frac{1}{t}\int_{0}%
^{t}(g^{q})^{\ast}(s)ds\right)  ^{1/q}.
\]
Integrating over $(0,\Vert f\Vert_{0})$ yields
\[
\Vert f\Vert_{L^{\infty}(X)}-f^{\ast\ast}(\Vert f\Vert_{0})\leq c_{1}\int
_{0}^{\Vert f\Vert_{0}}t^{1/s-1-1/q}\left(  \int_{0}^{t}(g^{q})^{\ast
}(s)ds\right)  ^{1/q}dt.
\]
Thus, estimating the inner integral using H\"{o}lder's inequality$,$%
\begin{align*}
\Vert f\Vert_{L^{\infty}(X)}  &  \leq c_{1}\left[  \Vert g^{q}\Vert_{Z}%
\phi_{Z^{\prime}}(\Vert f\Vert_{0})\right]  ^{1/q}\int_{0}^{\Vert f\Vert_{0}%
}t^{1/s-1-1/q}dt+f^{\ast\ast}(\Vert f\Vert_{0})\\
&  =c_{1}\left[  \Vert g^{q}\Vert_{Z}\phi_{Z^{\prime}}(\Vert f\Vert
_{0})\right]  ^{1/q}\frac{1}{1/s-1/q}\Vert f\Vert_{0}^{1/s-1/q}+\frac{\Vert
f\Vert_{L^{1}(X)}}{\Vert f\Vert_{0}}\\
&  \leq c\left[  \Vert g^{q}\Vert_{Z}\phi_{Z^{\prime}}(\Vert f\Vert
_{0})\right]  ^{1/q}\Vert f\Vert_{0}^{1/s-1/q},
\end{align*}
where in the last line we used the result obtained in the first half of the theorem.
\end{proof}

Finally for a different connection between Poincar\'{e} inequalities and
symmetrization with other interesting applications we refer to \cite{JMMMR}
(see also \cite{LE} for the relevant family of Poincar\'{e} inequalities). It
would be of interest to extend the results of these papers to the metric setting.

\section{Appendix: Vector fields satisfying H\"{o}rmander's condition}

We present a concrete example of our embedding theorem. But first let us
briefly review some relevant definitions.

Let $X_{1},\dots,X_{m},$ be a collection of $\mathcal{C}^{\infty}$ vector
fields defined in a neighborhood $\Omega$ of the closure of $B(0,1)$, the unit
ball in $\mathbb{R}^{n}$. For a multiindex $\alpha=(i_{1},\dots,i_{k})$ denote
by $X_{\alpha}$ the commutator $[X_{i_{1}},[X_{i_{2}},\dots,[X_{i_{k-1}%
},X_{i_{k}}]],\dots]$ of length $|\alpha|=k$. We shall assume that
$X_{1},\dots,X_{m},$ satisfy H\"{o}rmander's condition: there exists an
integer $d$ such that the family of commutators, up to order $d$,
$\{X_{\alpha}\}_{|\alpha|\leq d}$, spans the tangent space $\mathbb{R}^{n}$ at
each point of $\Omega$. An admissible path $\gamma$ is a Lipschitz curve,
$\gamma:[a,b]\rightarrow\Omega,$ such that there exist functions $c_{i}(t)$,
$a\leq t\leq b$, satisfying $\sum_{i=1}^{m}c_{i}^{2}(t)\leq1,$ and
\[
\gamma^{\prime}(t)=\sum_{i=1}^{m}c_{i}(t)X_{i}(\gamma(t)),
\]
for a.e. $t\in\lbrack a,b]$. A natural metric (the so-called
Carnot-Carath\'{e}odory metric) on $\Omega$ associated to $X_{1},\dots,X_{m},$
is defined by%

\begin{align*}
\varrho(\xi,\nu)=\min\{b\geq0:\text{there is an admissible path }%
\gamma:[0,b]\rightarrow\Omega\\
\text{ such that }\gamma(0)=\xi\text{ and }\gamma(b)=\nu\}.
\end{align*}

Jerison \cite{JE} proved the following general theorem.

\begin{theorem}
\label{th:horm_poincare} For every $1\leq p<\infty,$ there exist a constant
$c>0,$ and a radius $r_{0},$ such that, for every $\xi\in\{x\in\mathbb{R}%
^{n}:|x|<1\},$ and every $r$, $0<r<r_{0}$, for which $B(\xi,2r)=\{\eta
:\varrho(\xi,\eta)<2r\}\subset\Omega$, we have
\[
\left(  \int_{B(\xi,r)}|f(x)-f_{B(\xi,r)}|^{p}dx\right)  ^{1/p}\leq cr\left(
\int_{B(\xi,r)}\left[  \sum_{i=1}^{m}|X_{i}f(x)|\right]  ^{p}dx\right)
^{1/p},
\]
for all $f\in\mathcal{C}^{\infty}(\overline{B(\xi,r)})$, where the integration
is with respect to Lebesgue measure.
\end{theorem}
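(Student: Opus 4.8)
The final statement is Jerison's Poincar\'{e} inequality, and any proof of it is substantial; here is the route I would take. The plan is to reduce the integral inequality to a pointwise representation of $f(x)-f_{B(\xi,r)}$ in terms of the horizontal gradient $Xf=(X_{1}f,\dots,X_{m}f)$, and then to average that representation over the ball. The first ingredient is the geometric description of Carnot--Carath\'{e}odory balls, i.e.\ the \emph{ball--box theorem} of Nagel--Stein--Wainger: for $\xi$ near the origin and $r<r_{0}$ there are ``exponential'' coordinates in which $B(\xi,r)$ is trapped between two coordinate boxes with side lengths comparable to powers $r^{d_{j}}$, $1\le d_{j}\le d$, and in particular Lebesgue measure is doubling on $\Omega$, $\mu(B(\xi,2r))\le c_{d}\,\mu(B(\xi,r))$. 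I expect this to be the principal obstacle: it controls everything downstream---the length of admissible chains joining points of a ball, the Jacobians appearing in the change of variables, and the rate of decay of $\mu(B(\xi,r))$ as $r\to0$---and its own proof is a delicate commutator/ODE analysis.

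Granting the ball--box geometry, the next step is to join any two points $x,y\in B(\xi,r)$ by an admissible path $\gamma$ of length $\lesssim r$ lying in $B(\xi,Cr)$, and to write, for smooth $f$,
\[
f(x)-f(y)=\int_{0}^{L(\gamma)}\sum_{i=1}^{m}c_{i}(t)\,X_{i}f(\gamma(t))\,dt .
\]
Averaging this identity over $y\in B(\xi,r)$ and estimating the right-hand side by Fubini---which needs a uniform bound on the overlap of the family of connecting paths, again a consequence of the ball--box theorem---yields the $p=1$ bound
\[
|f(x)-f_{B(\xi,r)}|\le C\,r\left(\frac{1}{\mu(B(\xi,Cr))}\int_{B(\xi,Cr)}|Xf|\,dx\right),
\]
possibly with the enlarged ball $B(\xi,Cr)$, which is then reduced back to $B(\xi,r)$ by a standard chaining/self-improvement argument (a telescoping sum over a Whitney-type decomposition of $B(\xi,r)$, using doubling).

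The final step is to pass from $p=1$ to general $1\le p<\infty$. Rather than iterating, I would keep the representation formula and apply H\"{o}lder's inequality: bounding the chain-averaged integral of $|Xf|$ by the corresponding $L^{p}$ average costs only Jacobian factors already under control, so the same computation gives
\[
\left(\frac{1}{\mu(B(\xi,r))}\int_{B(\xi,r)}|f-f_{B(\xi,r)}|^{p}\,dx\right)^{1/p}\lesssim r\left(\frac{1}{\mu(B(\xi,Cr))}\int_{B(\xi,Cr)}|Xf|^{p}\,dx\right)^{1/p},
\]
and the chaining lemma again removes the enlargement on the right. An alternative, closer to Jerison's original argument, is to establish the $p=2$ case first from H\"{o}rmander's subelliptic a priori estimate $\Vert u\Vert_{H^{\varepsilon}}\lesssim\Vert Lu\Vert_{L^{2}}+\Vert u\Vert_{L^{2}}$ for the sum-of-squares operator $L=-\sum_{i}X_{i}^{\ast}X_{i}$, via the Green function of $L$ on a Carnot--Carath\'{e}odory ball, and then to extrapolate in $p$; but the geometric input is the same, and controlling the shape and measure of the CC-balls remains the crux.
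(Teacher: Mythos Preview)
The paper does not prove this theorem at all: it is quoted as a result of Jerison \cite{JE} (``Jerison \cite{JE} proved the following general theorem'') and is used as a black box in the Appendix to feed into the paper's own metric-space machinery. So there is no proof in the paper to compare your proposal against.

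Your outline is a fair high-level sketch of the known approaches---the Nagel--Stein--Wainger ball--box geometry plus doubling, a path-integration representation of $f(x)-f(y)$ along admissible curves, averaging and chaining to get the $p=1$ case on a possibly enlarged ball, and then either H\"{o}lder or Jerison's original $L^{2}$/subelliptic route for general $p$. That is consistent with how the result is actually proved in the literature. One caveat: the step where you ``average over $y$ and use Fubini with a uniform overlap bound on the connecting paths'' hides the real work---controlling the Jacobian of the exponential-type map parametrizing the admissible curves is exactly where Jerison's argument (and the later Lanconelli--Morbidelli and Franchi--Lu--Wheeden variants) invests most of its effort, and the ball--box theorem alone does not immediately give it. So your identification of the ball--box theorem as the ``principal obstacle'' is slightly off; it is necessary input, but the quantitative change-of-variables for the path family is a separate and equally substantial ingredient. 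For the purposes of this paper, however, none of this is needed: the authors simply invoke the theorem.
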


Moreover, by \cite[\S3 and Theorem 4]{nagel}, for an arbitrary compact set
$K\subset\Omega$, there exist $c>0$ and $r_{0}>0$ such that for all $\xi\in K$
and $\varrho<r_{0}$ we have
\begin{equation}\label{eq:doubling_horm}
|B(\xi,2\varrho)|\leq c|B(\xi,\varrho)|,
\end{equation}
where $|\cdot|$ indicates Lebesgue measure. Therefore, if $\xi\in\Omega$ and $r>0$ such
that $B(\xi,4r)\subset\Omega$ and (\ref{eq:doubling_horm}) hold for every $\varrho<4r$,
then $B(\xi,4r)$ with Lebesgue measure, is a homogeneous space. Thus, combining Theorem
\ref{th:horm_poincare} and the theory of this paper implies the following Sobolev
inequality for vector fields satisfying H\"{o}rmander's condition.

\begin{theorem}
\label{th:horm_emb} There exists a positive constant $c$ such that
\begin{equation}
\Vert f\chi_{B(\xi,r)}\Vert_{Y^{p}(\infty,s)}\leq c\left(  \Vert\sum_{i=1}%
^{m}|X_{i}f|\Vert_{Y}+\Vert f\Vert_{Y}\right)
\end{equation}
for all $f\in\mathcal{C}^{\infty}(\overline{B(\xi,4r)})$.
\end{theorem}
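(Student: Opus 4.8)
The plan is to read the H\"{o}rmander setting off as a concrete instance of the abstract framework of Section~2 and then invoke Theorem~\ref{th:metric_embedding} with $q=p$. First I fix the center $\xi$ and, if necessary, shrink the admissible radius: I take $r$ small enough that a fixed dilate $B(\xi,Nr)$ ($N$ an absolute constant, $N=20$ will do) is contained in $\Omega$ and that the Nagel--Stein--Wainger estimate (\ref{eq:doubling_horm}) holds for all balls of radius $<Nr$ centered in $\overline{B(\xi,Nr)}$. Then $X:=B(\xi,Nr)$, endowed with the Carnot-Carath\'{e}odory metric $\varrho$ and Lebesgue measure, is a homogeneous space in the sense of Definition~\ref{de:homogeneous_space}, of dimension $s=\log_{2}c_{d}$: for balls of large radius the doubling condition is automatic since $\mu(X)<\infty$, and for small balls it is precisely (\ref{eq:doubling_horm}). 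I put $B_{0}=B(\xi,r)$.

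Next I produce the Poincar\'{e} inequality. Set $g=\sum_{i=1}^{m}|X_{i}f|\ge 0$. By Jerison's Theorem~\ref{th:horm_poincare}, for every ball $B=B(\eta,\rho)$ with $B(\eta,2\rho)\subset\Omega$ one has
\[
\left(\frac{1}{|B|}\int_{B}|f-f_{B}|^{p}\,dx\right)^{1/p}\le c_{P}\,\rho\left(\frac{1}{|B|}\int_{B}g^{p}\,dx\right)^{1/p},
\]
where $c_{P}$ depends only on $p$, the vector fields $X_{1},\dots,X_{m}$ and the compact set $\overline{B(\xi,Nr)}$, not on $f$. If $B(\eta,\rho)\subset 4B_{0}=B(\xi,4r)$, then $\eta\in B(\xi,4r)$ and $\rho\le 8r$, so $B(\eta,2\rho)\subset B(\xi,20r)\subset\Omega$; moreover $\overline{B(\eta,\rho)}\subset\overline{B(\xi,4r)}$, so $f\in\mathcal{C}^{\infty}(\overline{B(\eta,\rho)})$. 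Hence $f$ and $g$ satisfy a $p-q-$Poincar\'{e} inequality on $4\sigma B_{0}$ in the sense of Definition~\ref{de:metric_poincare}, with $q=p$, $\sigma=1$ and constant $c_{P}$.

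Now I apply Theorem~\ref{th:metric_embedding} to this $X$, the ball $B_{0}$, and the pair $f,g$. Since $q=p$, $P_{\max\{p,q\}}=P_{p}$, which is bounded on $Y$ by the standing hypothesis on $Y$ carried over from Theorem~\ref{th:metric_embedding}. That theorem produces a constant $c$, independent of $f$ and $g$, with
\[
\Vert f\chi_{B_{0}}\Vert_{Y^{p}(\infty,s)}\le c\bigl(\Vert g\Vert_{Y}+\Vert f\chi_{B_{0}}\Vert_{Y}\bigr).
\]
Since $|f\chi_{B_{0}}|\le|f|$ pointwise on $X$ we have $\Vert f\chi_{B_{0}}\Vert_{Y}\le\Vert f\Vert_{Y}$, and inserting $g=\sum_{i=1}^{m}|X_{i}f|$ gives the asserted estimate.

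The only real work is the geometric bookkeeping of the first two paragraphs: fixing the admissible range of $r$ so that $\Omega$ carries both the doubling property and Jerison's inequality on every sub-ball of $4B_{0}$ together with its double, and checking that the resulting constants ($c_{d}$, hence $s$, and $c_{P}$) depend only on the vector fields and a fixed compact neighborhood of $\xi$, not on $f$. With that in hand the theorem follows at once from the symmetrization/embedding machinery of Sections~2--3, and no analytic obstacle remains.
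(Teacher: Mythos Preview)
Your proposal is correct and follows the same route as the paper: verify via Nagel--Stein--Wainger that the Carnot--Carath\'{e}odory ball with Lebesgue measure is a homogeneous space, feed in Jerison's $p$--$p$ Poincar\'{e} inequality with $\sigma=1$, and then invoke Theorem~\ref{th:metric_embedding}. The paper dispatches this in a single sentence (``combining Theorem~\ref{th:horm_poincare} and the theory of this paper''), whereas you spell out the geometric bookkeeping needed to ensure that every sub-ball of $4B_{0}$ together with its double sits inside $\Omega$; this extra care is appropriate and changes nothing in substance.
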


A slightly modified version of Theorem \ref{th:horm_emb} sharpens known
results. For example, starting with $1-1-$Poincar\'{e} inequality for vector
fields satisfying H\"{o}rmander's condition, we can proceed as in the proof of
\cite[Theorem 2]{JMMM} and together with Theorem \ref{th:metric_bi} we obtain
the inequality
\begin{equation}
\Vert f\chi_{B}-f_{B}\Vert_{Y^{1}(\infty,s)}\leq c\Vert\sum_{i=1}^{m}%
|X_{i}f|\Vert_{Y}, \label{eq:horm_emb2}%
\end{equation}
where $B=B(\xi,r)$. According to \cite[Theorem 3.1]{BMR} the space
$Y^{1}(\infty,s)$ is contained in the Hansson-Br\'{e}zis-Wainger space
\[
H_{s}(B):=\left\{  f:\int_{0}^{|B|}\left(  \frac{f^{\ast\ast}(t)}{1+\log
\frac{|B|}{t}}\right)  ^{s}\frac{dt}{t}<\infty\right\}
\]
which, in turn, is known to be contained in the space described in
\cite[Theorem 6.1]{HK}.

For other examples of metric spaces complying with the Poincar\'{e} inequality
condition see \cite{HK}.\bigskip


\begin{thebibliography}{99}
\bibitem{Ba}C. Bandle, \textsl{Rayleigh-Faber-Krahn inequalities and
quasilinear elliptic boundary value problems}, Nonlinear analysis and
applications: to V. Lakshmikantham on his 80th birthday \textbf{1, 2} (2003), 227-240.

\bibitem{BMR}J. Bastero, M. Milman and F. Ruiz, \textsl{A note on }%
$L(\infty,q)$\textsl{\ spaces and Sobolev embeddings}, Indiana Univ. Math. J.
\textbf{52} (2003), no. 5, 1215--1230.

\bibitem{BDS}C. Bennett, R. DeVore and R. Sharpley,\textsl{ Weak }$L^{\infty}%
$\textsl{ and }$BMO$, Ann. of Math. \textbf{113} (1981), no. 3, 601-611.

\bibitem{BS}C. Bennett and R. Sharpley, \textsl{Interpolation of Operators},
Academic Press, Boston\textbf{\ }(1988).

\bibitem{CGMP}M. J. Carro, A. Gogatishvili, J. Mart\'in and L. Pick,
\textsl{Functional properties of rearrangement invariant spaces defined in
terms of oscillations}, J. Funct. Anal. \textbf{229} (2005), no. 2, 475-404.

\bibitem{HKF}B. Franchi, P. Hajlasz and P. Koskela, \textsl{Definitions of
Sobolev classes on metric spaces}, Ann. Inst. Fourier (Grenoble) \textbf{49}
(1999), no.6, 1903-1924.

\bibitem{HK}P. Hajlasz and P. Koskela,\textsl{ Sobolev met Poincar\'{e}},
Mem.\ Amer. Math. Soc. \textbf{145} (2000), no 688, x+101 pp.

\bibitem{BJ}B. Jawerth, \textsl{Some observations on Besov and
Triebel-Lizorking spaces}, Math. Scand. \textbf{40} (1977), no. 1, 94-104.

\bibitem{BJMM}B. Jawerth and M. Milman, Preprint 2006.

\bibitem{JE}D. Jerison, \textsl{The Poincar\'{e} inequality for vector fields
satisfying H\"{o}rmander's condition}, Duke Math. J. \textbf{53} (1986), no.
2, 503-523.

\bibitem{Ka}J. Kalis, PhD dissertation, Florida Atlantic University, in preparation.

\bibitem{Ko}V. I. Kolyada, \textsl{Rearrangements of functions, and embedding
theorems}, Russian Math. Surveys \textbf{44} (1989), no. 5, 73--117.

\bibitem{LE}M. Ledoux, \textsl{On Improved Sobolev embedding theorems, }Math.
Res. Lett. 10 (2003), 659-669.

\bibitem{LP}A. Lerner and C. Perez, \textsl{Self-improving properties of
generalized Poincar\'{e} type inequalities through rearrangements}, Math.
Scand. \textbf{97} (2005), no. 2, 217-234.

\bibitem{MV}F. Maggi and F. Villani, \textsl{Balls have the worst best Sobolev
inequalities II: Variants and Extensions}, Preprint 2006.

\bibitem{JMMM}J. Mart\'{i}n and M. Milman, \textsl{Higher order symmetrization
inequalities and applications}, Journal Math. Anal. Appl., to appear.

\bibitem{JMMMR}J. Mart\'{i}n and M. Milman, \textsl{Sharp Gagliardo-Nirenberg
inequalities via symmetrization}, Math. Res. Lett., to appear.

\bibitem{Maz}V. G. Maz'ya, \textsl{Sobolev Spaces, }Springer-Verlag, New York, 1985.

\bibitem{MP}M. Milman and E. Pustylnik, \textsl{On sharp higher order Sobolev
embeddings}, Comm. Cont. Math. \textbf{6} (2004), no. 3, 1-17.

\bibitem{MO}S. J. Montgomery-Smith, \textsl{The Hardy operator and Boyd
indices}, Lect. Notes Math.\ Appl. \textbf{175} (1994), 359-364, Dekker, New
York, 1996.

\bibitem{nagel}A. Nagel and E M. Stein and Stephen Wainger, \textsl{Balls and
metrics defined by vector fields I: Basic properties}, Acta Math. \textbf{155}
(1985), no. 1-2, 103-147.

\bibitem{SS}Y. Sagher and P. Shvartsman, \textsl{Rearrangement-function
inequalities and interpolation theory}, J. Approx. Th. \textbf{119} (2002),
no. 2, 214-251.

\bibitem{Ta}G. Talenti, \textsl{Inequalities in rearrangement invariant
function spaces} Nonlinear analysis, function spaces and applications, Vol.
\textbf{5} (Prague, 1994), 177--230.
\end{thebibliography}
\end{document}